\documentclass[a4paper,12pt]{amsart}
\usepackage[T1]{fontenc}
\usepackage[latin1]{inputenc}
\usepackage[english]{babel}

\usepackage{geometry}

\usepackage{setspace} 
\usepackage{amsfonts}
\usepackage{amssymb}
\usepackage{amsthm}
\usepackage{amsmath}
\usepackage{amscd}
\usepackage{verbatim} 
\usepackage{txfonts}
\usepackage{braket} 
\usepackage{mathtools} 
\usepackage{mathrsfs}
\usepackage{mathabx}

\addtolength{\footskip}{1cm}

\geometry{a4paper,top=3cm,bottom=3cm,left=3cm,right=3cm,heightrounded,bindingoffset=5mm}

\DeclarePairedDelimiter{\abs}{\lvert}{\rvert}

\pagestyle{plain}

\theoremstyle{plain}
\newtheorem{theor}{Theorem}[section]
\newtheorem{prop}[theor]{Proposition}
\newtheorem{lemma}[theor]{Lemma}
\newtheorem{cor}[theor]{Corollary}
\newtheorem{fact}[theor]{Fact}

\theoremstyle{definition}
\newtheorem{definition}[theor]{Definition}

\newtheorem{remark}[theor]{Remark}

\renewcommand{\epsilon}{\varepsilon}
\renewcommand{\theta}{\vartheta}
\renewcommand{\rho}{\varrho}
\renewcommand{\phi}{\varphi}

\newcommand{\fleq}[2][2]{\hbox to 0pt{\hss \hbox to\columnwidth{\hspace{#1em} $\displaystyle#2$ \hfil} \hss}}

\newcommand{\Z}{\mathbb{Z}}
\newcommand{\Q}{\mathbb{Q}}

\title{ordered abelian groups that do not have elimination of imaginaries}
\author{martina liccardo\textsuperscript{\textasteriskcentered}}
\thanks{\textsuperscript{\textasteriskcentered}Dipartimento di Matematica e Applicazioni "Renato Caccioppoli", Università degli Studi di Napoli "Federico II", Via Cinthia 26, Monte S. Angelo, Napoli, Italy  \\ martina.liccardo2@unina.it}

\begin{document}

\maketitle

\textsc{abstract.}
{\small We investigate the property of elimination of imaginaries for some special cases of ordered abelian groups. We show that certain Hahn products of ordered abelian groups do not eliminate imaginaries in the pure language of ordered groups. Moreover, we prove that, adding finitely many constants to the language of ordered abelian groups, the theories of the finite lexicographic products $\Z^n$ and $\Z^n \times \Q$ have definable Skolem functions.}

\section*{Introduction}
Proving elimination of imaginaries is essentially a definability problem. Indeed, the intuitive idea for a structure eliminating imaginaries is that some quotient structures, in general not definable, admit canonical codings which essentially define them. Our aim is to investigate this property for some special cases of ordered abelian groups. 

An ordered abelian group $G$ is an abelian group endowed with a linear order compatible with the group operation: $a < a'\implies a+b<a'+b$ for all $a, a', b \in G$. The order is \emph{discrete} if there exists a minimal positive element, \emph{dense} otherwise. Both classes of structures are axiomatizable in $L_{oag} = \Set{0,+,-,<}$ and they play an important role in the model theory of valued fields. Gurevich and Schmitt, from the 60s to the 80s, gave a notable contribution to the model theory of ordered abelian groups, providing, among other things, a test for elementary equivalence of two ordered abelian groups \cite{gurevich}, and a proof that the theory of ordered abelian groups is NIP \cite{NIP}.

In the literature there are partial results on elimination of imaginaries in ordered abelian groups. Examples of ordered abelian groups that eliminate imaginaries are divisible ordered abelian groups $G \equiv (\Q,+,<)$, see \cite{pillay}, and discrete ordered abelian groups $G \equiv (\Z,+,<)$, see \cite{cluckers} and Appendix A of \cite{kovacsics}. One can ask whether every ordered abelian group has elimination of imaginaries in $L_{oag}$. We solve negatively the question producing examples of ordered abelian groups that do not eliminate imaginaries in $L_{oag}$. Indeed, we prove that elimination of imaginaries fails for the theory of any lexicographic products of $\Z$, $\mathscr{H}_{i < \alpha}\Z$, over a well-ordered index set $\alpha>1$, and for the theory of the lexicographic product $\mathscr{H}_{i < \alpha}\Z \times \Q$. 

A property related to the elimination of imaginaries is the existence of definable Skolem functions. In particular, definable Skolem functions allow to reduce the goal of finding a code for every imaginary to coding just one dimensional definable sets. We prove that, once we add finitely many judiciously chosen elements as new constants to $L_{oag}$, the theories of $\Z^n$ and $\Z^n \times \Q$, for any $n\ge1$, have definable Skolem functions.

\section{Notation and preliminaries}
First of all, we recall all definitions and notions that we will use in this paper.

Let $\mathfrak{M}$ be an $L$-structure, $m$ a positive integer and $E(\bar{x},\bar{y})$ a $0$-definable equivalence relation on $\mathfrak{M}^m$. The $E$-equivalence classes of $\mathfrak{M}^m$ are called \emph{imaginary elements} of $\mathfrak{M}$, and we say that $\mathfrak{M}$ eliminates imaginaries if each imaginary can be coded in the structure. More precisely, the notion of elimination of imaginaries is stated in the following definition.

\begin{definition}[\cite{hodges}]
\label{defEI}
An $L$-structure $\mathfrak{M}$ has \emph{elimination of imaginaries} if for any positive integer $m$, any $0$-definable equivalence relation $E(\bar{x},\bar{y})$ on $\mathfrak{M}^m$ and any $E$-class $X$, there is an $L$-formula $\theta(\bar{x},\bar{z})$ such that $X=\theta(\mathfrak{M}^m,\bar{b})$ for some \textsc{unique} tuple $\bar{b} \subseteq \mathfrak{M}$. Moreover, such a tuple $\bar{b}$ is called a \emph{canonical parameter} for $X$.
\end{definition}

Let $\mathfrak{M}$ be an $L$-structure and $S \subseteq \text{Aut}(\mathfrak{M})$. By $\text{Fix}(S)$ we denote the set 
\begin{equation*}
\text{Fix}(S) = \Set{a \in \mathfrak{M} \mid f(a) = a \text{ for all } f \in S}.
\end{equation*}
Let $A \subseteq \mathfrak{M}$. By $\text{Aut}(\mathfrak{M}/A)$ and $\text{Stab}_\mathfrak{M}(A)$ we denote the group of all automorphisms of $\mathfrak{M}$ fixing $A$ pointwise and fixing $A$ setwise, respectively. Namely,
\begin{gather*} 
\text{Aut}(\mathfrak{M}/A)=\Set{ \phi \in \text{Aut}(\mathfrak{M}) \mid \phi(a)=a \text{ for every } a \in A} \\ \text{Stab}_\mathfrak{M}(A)=\Set{\phi \in \text{Aut}(\mathfrak{M}) \mid \phi(A)=A}.
\end{gather*}

\begin{remark}
\label{prEI}
Let $\mathfrak{M}$ be an $L$-structure that admits elimination of imaginaries and let $\bar{b}$ be a canonical parameter for an $E$-class $X$, where $E(\bar{x},\bar{y})$ is a $0$-definable equivalence relation on $\mathfrak{M}^m$. Then $\bar{b}$ is fixed by the same automorphisms of $\mathfrak{M}$ which leave $X$ invariant. Therefore, if $\mathfrak{M}$ eliminates imaginaries, for every $E$-class $X$ there exists a tuple $\bar{b} \subseteq \mathfrak{M}$ such that, for any automorphism $f$ of $\mathfrak{M}$, $f \in \text{Stab}_\mathfrak{M}(X)$ if and only if $f \in \text{Aut}(\mathfrak{M}/\bar{b})$.
\end{remark}

\begin{definition}
\label{uniformEI}
An $L$-structure $\mathfrak{M}$ has \emph{uniform elimination of imaginaries} if for any positive integer $m$, and any $0$-definable equivalence relation $E(\bar{x},\bar{y})$ on $\mathfrak{M}^m$, there is an $L$-formula $\theta(\bar{x},\bar{z})$ such that for every $E$-class $X$, there is a unique tuple $\bar{b} \subseteq \mathfrak{M}$ such that $X=\theta(\mathfrak{M}^m,\bar{b})$.
\end{definition}

In other words, $\mathfrak{M}$ uniformly eliminates imaginaries if one can find a formula $\theta(\bar{x},\bar{z})$ as in Definition \ref{defEI} depending only on the equivalence relation $E(\bar{x},\bar{y})$ and not on the equivalence class $X$. Notice that uniform elimination of imaginaries is preserved under elementarily equivalence.

We say that a theory $T$ in $L$ has (uniform) elimination of imaginaries if every model of $T$ has (uniform) elimination of imaginaries. The uniformity of elimination of imaginaries, in the sense of Definition \ref{uniformEI}, holds under the condition stated in the following well-known result (see, for example, \cite[Theorem 16.16]{poizatbook}).

\begin{theor}
Let $T$ be an $L$-theory. Suppose $T$ has elimination of imaginaries and, for every model of $T$, $dcl(\emptyset)$ contains at least two elements. Then $T$ has uniform elimination of imaginaries.
\end{theor}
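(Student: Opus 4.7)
The strategy is a compactness reduction to finitely many coding formulas, followed by an amalgamation into a single formula using two uniformly $\emptyset$-definable constants. Fix a $0$-definable equivalence relation $E(\bar x,\bar y)$ on $\mathfrak M^m$ and, for each $L$-formula $\theta(\bar x,\bar z)$, introduce the auxiliary $L$-formula
\[
\psi_\theta(\bar y)\equiv \exists!\,\bar z\;\forall\bar x\,\bigl(\theta(\bar x,\bar z)\leftrightarrow E(\bar x,\bar y)\bigr),
\]
which expresses that the $E$-class of $\bar y$ admits a unique $\theta$-code. Elimination of imaginaries says that in every model of $T$ every $\bar y$ satisfies some $\psi_\theta$, so the extension of $T$ by a tuple of new constants $\bar c$ together with the schema $\{\neg\psi_\theta(\bar c):\theta\in L\}$ is inconsistent. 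Compactness then yields finitely many formulas $\theta_1,\dots,\theta_k$ with
\[
T\vdash\forall\bar y\,\bigvee_{j=1}^{k}\psi_{\theta_j}(\bar y),
\]
so that every $E$-class (in every model of $T$) is uniquely coded by at least one of the $\theta_j$.

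A second compactness argument, applied to the hypothesis $|dcl(\emptyset)|\geq 2$, produces $L$-formulas $\chi_0(z),\chi_1(z)$ whose unique realizers $c_0,c_1$ exist and differ in every model of $T$. Choose an integer $K$ with $2^K\geq k$, fix distinct tuples $\bar\epsilon_1,\dots,\bar\epsilon_k\in\{0,1\}^K$, and pad each parameter sort $\bar z_j$ with copies of $c_0$ to a common length $N$. Define the single $L$-formula $\theta^{\ast}(\bar x,\bar i,\bar z)$, with $\bar i$ of length $K$ and $\bar z$ of length $N$, as the disjunction over $j=1,\dots,k$ of the conjunction of: (a) $\bar i$ equals the tuple $(c_{\epsilon_{j,1}},\dots,c_{\epsilon_{j,K}})$, written via $\chi_0,\chi_1$; (b) the padding coordinates of $\bar z$ are equal to $c_0$; (c) the meaningful part of $\bar z$ is the unique $\theta_j$-code for the $E$-class of $\bar x$; (d) for every $j'<j$, no tuple codes that class via $\theta_{j'}$. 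Clause (d) makes the selected index minimal, hence $\bar i$ canonical; clauses (a) and (b) canonize the auxiliary coordinates; clause (c) then pins down $\bar z$. Each $E$-class therefore acquires a unique canonical parameter $(\bar i,\bar z)$ under $\theta^{\ast}$, giving uniform elimination of imaginaries for $E$.

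The principal obstacle is \emph{uniqueness} of the canonical parameter rather than existence: a naive finite disjunction of the $\theta_j$ from the first step would already code every class by a single formula, but in general with several admissible parameter tuples per class. The minimal-index selector canonizes the choice of $j$, while the two distinct $\emptyset$-definable constants $c_0,c_1$ are used both to encode $j$ inside the parameter and to canonically fill the unused parameter slots. This is precisely where the assumption on $dcl(\emptyset)$ is used: with only one $\emptyset$-definable element, neither the index nor the padding could be canonically attached to the parameter, and uniqueness would fail.
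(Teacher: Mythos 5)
The paper does not actually prove this statement: it is quoted as a well-known result with a pointer to Poizat's book (Theorem 16.16), so there is no in-paper proof to compare against. Your argument is the standard one behind that citation --- compactness to reduce to finitely many coding formulas $\theta_1,\dots,\theta_k$, then amalgamation into a single formula using a minimal-index selector and two distinct $\emptyset$-definable constants to tag the index and pad the parameter slots --- and the overall structure is correct; the role you assign to the hypothesis $|dcl(\emptyset)|\geq 2$ is exactly right.

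Two points need tightening. First, clause (d) as written selects the least $j'$ such that \emph{some} tuple codes the class via $\theta_{j'}$, whereas clause (c) requires the code at the selected index to be \emph{unique}. These can diverge: if some $\theta_{j'}$ with $j'<j$ codes the class via several distinct parameters, then $\psi_{\theta_{j'}}$ fails (so $j'$ cannot be used) while your clause (d) also kills every disjunct with index above $j'$, and the class receives no $\theta^{\ast}$-code at all. The fix is to phrase (d) as $\neg\psi_{\theta_{j'}}$ for all $j'<j$, i.e.\ minimality with respect to admitting a \emph{unique} code, which is clearly what you intend given the wording of (c). Second, the ``second compactness argument'' does not directly hand you a single pair $\chi_0,\chi_1$ valid in all models: it yields finitely many candidate pairs $(\chi_i,\chi_i')$ such that in each model at least one pair works, and you must then splice them into one pair by the same least-witnessing-index disjunction you use for the $\theta_j$'s. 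Both repairs are routine, so the proof goes through.
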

Note that the existence of at least two $0$-definable elements in order to have uniform elimination of imaginaries cannot be avoided. Indeed, both classes of the equivalence relation $(x_1=x_2 \land y_1 = y_2) \lor (x_1 \ne x_2 \land y_1 \ne y_2)$ are definable without parameters by the formulas $x_1 = x_2$ and $x_1 \ne x_2$, respectively, but there is no way of using a unique tuple to pick out one of these formulas.

Another form of elimination of imaginaries is the following.
\begin{definition}
We say that an $L$-structure $\mathfrak{M}$  has \emph{weak elimination of imaginaries} if for any positive integer $m$, any $0$-definable equivalence relation $E(\bar{x},\bar{y})$ on $\mathfrak{M}^m$ and any $E$-class $X$, there are a $L$-formula $\theta(\bar{x},\bar{z})$ and a \textsc{finite} set $B$ of tuples of $\mathfrak{M}$ such that $X=\theta(\mathfrak{M}^m,\bar{b})$ if and only if $\bar{b} \in B$. \\ We say that a theory $T$ in $L$ has weak elimination of imaginaries if every model of $T$ has weak elimination of imaginaries.
\end{definition}

\begin{fact}[\cite{poizat}]
\label{factpoizat}
Let $T$ be an expansion of the theory of linear order. If $T$ weakly eliminates imaginaries, then $T$ eliminates imaginaries.
\end{fact}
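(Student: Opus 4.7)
The plan is to leverage the ambient linear order to canonically single out, for each $E$-class, one tuple from the finite set of defining parameters supplied by weak elimination of imaginaries. Fix a model $\mathfrak{M} \models T$, a $0$-definable equivalence relation $E(\bar{x},\bar{y})$ on $\mathfrak{M}^m$, and an $E$-class $X$. Weak EI furnishes an $L$-formula $\theta(\bar{x},\bar{z})$ and a finite (and, since $X$ is nonempty, nonempty) set $B = \Set{\bar{b}_1, \ldots, \bar{b}_k}$ of tuples in $\mathfrak{M}$ such that $\theta(\mathfrak{M}^m, \bar{b}) = X$ if and only if $\bar{b} \in B$.

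Because $L$ extends the language of linear order, $<$ induces a lexicographic order $<_{\mathrm{lex}}$ on tuples of any fixed length. I reindex $B$ so that $\bar{b}_{i_1} <_{\mathrm{lex}} \cdots <_{\mathrm{lex}} \bar{b}_{i_k}$ and take as candidate canonical parameter the concatenation $\bar{c} := (\bar{b}_{i_1}, \ldots, \bar{b}_{i_k})$.

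I then introduce a formula $\theta^*(\bar{x}, \bar{z}_1, \ldots, \bar{z}_k)$ defined as the conjunction of the following clauses: (i) $\bar{z}_1 <_{\mathrm{lex}} \cdots <_{\mathrm{lex}} \bar{z}_k$; (ii) the sets $\theta(\mathfrak{M}^m, \bar{z}_j)$ coincide for all $j = 1, \ldots, k$; (iii) every tuple $\bar{z}$ with $\theta(\mathfrak{M}^m, \bar{z}) = \theta(\mathfrak{M}^m, \bar{z}_1)$ is one of the $\bar{z}_j$; and (iv) $\theta(\bar{x}, \bar{z}_1)$. Clauses (ii) and (iii) are first-order in $L$ by rewriting set equality as $\forall \bar{y}\,(\theta(\bar{y}, \bar{z}) \leftrightarrow \theta(\bar{y}, \bar{z}_1))$. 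Plainly $\theta^*(\mathfrak{M}^m, \bar{c}) = X$. Conversely, if $\theta^*(\mathfrak{M}^m, \bar{c}') = X$ for some $\bar{c}' = (\bar{c}'_1, \ldots, \bar{c}'_k)$, then $X$ being nonempty forces clauses (i)--(iii) to hold, so $\theta^*(\mathfrak{M}^m, \bar{c}')$ reduces to $\theta(\mathfrak{M}^m, \bar{c}'_1) = X$; weak EI gives $\bar{c}'_1 \in B$, clause (ii) yields $\bar{c}'_j \in B$ for every $j$, clause (iii) yields $B \subseteq \Set{\bar{c}'_1, \ldots, \bar{c}'_k}$, and clause (i) pins down the ordering. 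Hence $\bar{c}' = \bar{c}$, so $\bar{c}$ is a canonical parameter and $\mathfrak{M}$ eliminates imaginaries.

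There is no deep obstacle here. The only point requiring attention is that both $\theta^*$ and its arity depend on $X$ through $k = \abs{B}$, which is harmless for Definition \ref{defEI} but would block a direct promotion to uniform EI in the sense of Definition \ref{uniformEI}. Conceptually, the linear order is exactly the ingredient needed to break the $k!$-fold symmetry among the possible listings of $B$; without such an order, no canonical tuple representative of $B$ could in general be singled out.
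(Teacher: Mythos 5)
Your argument is correct and is exactly the standard proof of this fact: the paper itself states it without proof (citing Poizat), and the classical argument is precisely to use the linear order to code the finite set $B$ of parameters by the single tuple obtained by concatenating its elements in lexicographic order, then to write down a formula asserting that the blocks of the parameter list exactly the $\theta$-definitions of the set in question. Your closing remarks on the nonemptiness of $E$-classes and on why this does not automatically yield \emph{uniform} elimination (the arity depends on $\abs{B}$) are also accurate.
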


Throughout the paper, unless otherwise stated, we work in the language of ordered abelian groups $L_{oag}$, and "definable" will mean definable with parameters.

Let $G$ be an ordered abelian group, and $f,g \in G$ such that $f<g$. The notations $[f,g]$ and $(f,g)$ stand for the intervals $\{h \in G \mid f \le h \le g\}$ and $\{h \in G \mid f < h <g\}$, respectively. The absolute value of $f$ is $\abs{f}= \max\{f,-f\}$.
Let $m \in \mathbb{N}$ be a positive integer, $\equiv_m$ denotes the binary relation on $G$ defined by 
\begin{equation*}
f \equiv_m g \text{ if and only if } f-g \in mG, 
\end{equation*}
Clearly, the relation $\equiv_m$ is a $0$-definable equivalence relation.

A subgroup $\Delta$ of an ordered abelian group $G$ is \emph{convex} in $G$ if, for every $g,h \in G$ such that $0 < g < h$ and $h \in \Delta$, we also have $g \in \Delta$. The set of all convex subgroups of $G$ is linearly ordered by inclusion and is denoted by $\mathcal{CS}(G)$. The order type of the collection of all proper convex subgroups of $G$ is called the \emph{rank} of $G$. We now introduce the following two families of definable convex subgroups, $\{H_n(f)\}_{f \in G \setminus nG}$ and $\{H_n^+(f)\}_{f \in G}$, see \cite{immi} for details. For any $n \in \mathbb{N}$ and $f \in G \setminus nG$, $H_n(f)$ denotes the largest convex subgroup $H$ of $G$ not intersecting $f + nG$. In particular,
\begin{equation*}
H_n(f)=\Set{h \in G \mid \Braket{h}^{conv} \cap f+nG = \emptyset},
\end{equation*}
where $\Braket{h}^{conv}$ is the smallest convex subgroup of $G$ containing $h$. For every $f \in G$, $H_n^+(f)$ denotes the definable convex subgroup
\begin{equation*}
H_n^+(f)=\underset{\Set{g \in G \setminus nG \mid f \in H_n(g)}}{\bigcap}H_n(g).
\end{equation*}

Let $(I,<)$ be an ordered set, and for each $i \in I$ let $G_i$ be an ordered abelian group. Consider the direct product of the groups $G_i$, $\prod_{i \in I}G_i$. For every $f \in \prod_{i \in I}G_i$, the \emph{support} of $f$ is the set
\begin{equation*}
supp(f)=\{i \in I \mid f(i) \ne 0\}.
\end{equation*}
Let
\begin{equation*}
H=\Set{f \in \prod_{i \in I}G_i \mid supp(f) \text{ is a well-ordered subset of } (I,<)}.
\end{equation*}
It is easily checked that $H$ is a subgroup of $\prod_{i \in I}G_i$ and is an ordered abelian group by defining
\begin{equation*}
f<g \text{ if and only if } f(i) < g(i), \text{ where } i=\min supp(g-f),
\end{equation*}
for any $f,g \in H$. The group $H$ endowed with this order is the \emph{Hahn product} of the family $\{G_i\}_{i \in I}$ and is denoted by $\mathscr{H}_{i \in I} G_i$. The ordered subgroup 
\begin{equation*}
\Set{ f \in \mathscr{H}_{i \in I} G_i \mid supp(f) \text{ is finite }}
\end{equation*}
is the \emph{lexicographic sum} of the family $\{G_i\}_{i \in I}$ and is denoted by $\sum_{i \in I}G_i$.

The lexicographic sum and the Hahn product of a family of ordered abelian groups are indistinguishable by first order properties. Indeed, the following holds.

\begin{prop}[\cite{habschmitt}, Corollary 6.3]
\label{elemsub}
Let $(I,<)$ be an ordered set and for each $i \in I$ let $G_i$ be an ordered abelian group. The lexicographic sum $\sum_{i\in I}G_i$ is an elementary substructure of the Hahn product $\mathscr{H}_{i \in I}G_i$.
\end{prop}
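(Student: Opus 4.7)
The plan is to verify the Tarski-Vaught test for the inclusion $\sum_{i \in I} G_i \hookrightarrow \mathscr{H}_{i \in I} G_i$. This inclusion is clearly an $L_{oag}$-embedding of ordered abelian groups, since the group operation and the order on $\mathscr{H}_{i \in I} G_i$ restrict to those on the lexicographic sum. So it suffices to show: for every $L_{oag}$-formula $\phi(\bar{x}, y)$ and every tuple $\bar{a}$ from $\sum_{i \in I} G_i$, any witness $b \in \mathscr{H}_{i \in I} G_i$ of $\exists y\, \phi(\bar{a}, y)$ can be replaced by some $b' \in \sum_{i \in I} G_i$ with $\mathscr{H}_{i \in I} G_i \models \phi(\bar{a}, b')$.

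Fix such $\bar{a}$ and $\phi$, let $b$ be a witness in $\mathscr{H}_{i \in I} G_i$, and let $S \subseteq I$ be the finite combined support of the tuple $\bar{a}$. My strategy is to define $b'$ by truncation: choose a finite set $T \supseteq S$ and set $b'(i) = b(i)$ for $i \in T$ and $b'(i) = 0$ otherwise, so that $b'$ automatically lies in $\sum_{i \in I} G_i$. The set $T$ must be chosen so that every atomic condition on $y$ that determines the truth of $\phi(\bar{a}, y)$ takes the same value at $b$ and at $b'$. To identify these atomic conditions, I would invoke a quantifier elimination result for ordered abelian groups in a suitable expansion of $L_{oag}$, of Gurevich-Schmitt type, with auxiliary sorts and predicates for the definable convex subgroups $H_n(f)$ and $H_n^+(f)$ introduced in the preliminaries. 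Any such QE reduces $\phi(\bar{a}, y)$ to a Boolean combination of atomic conditions each of which depends on $y$ only through finitely many of its coordinates.

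The main obstacle is establishing precisely this locality: one must verify that the divisibility conditions $y \equiv_m t(\bar{a})$ and the convex-subgroup membership conditions $y \in H_n(t(\bar{a}))$ and $y \in H_n^+(t(\bar{a}))$ (for $L_{oag}$-terms $t$ in $\bar{a}$) are controlled by only finitely many coordinates of $y$, regardless of the size of its well-ordered support. This is the genuinely nontrivial input and is what singles out the specific enrichment of $L_{oag}$ that one needs. Granted locality, one takes $T$ to be $S$ together with the finitely many test coordinates of $b$ invoked by these atomic conditions; the truncated element $b'$ then has finite support and realizes the same quantifier-free type over $\bar{a}$ in the enriched language as $b$, so $\mathscr{H}_{i \in I} G_i \models \phi(\bar{a}, b')$. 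This completes the Tarski-Vaught verification and hence shows $\sum_{i \in I} G_i \preceq \mathscr{H}_{i \in I} G_i$.
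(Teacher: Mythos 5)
The paper offers no proof of this proposition: it is quoted from Schmitt's Habilitationsschrift (Corollary 6.3 there), where it comes out of the Gurevich--Schmitt transfer machinery, in which elementary embeddings of ordered abelian groups are controlled by the induced structure on auxiliary sorts (the ``spines'' of definable convex subgroups), and the spines of $\sum_{i\in I}G_i$ and $\mathscr{H}_{i\in I}G_i$ are identified via $\Delta\mapsto\Delta\cap\sum_{i\in I}G_i$. Your Tarski--Vaught-plus-truncation plan is a reasonable way to try to reprove it, and the easy parts are fine: the inclusion is an $L_{oag}$-embedding, and the conditions $ky\diamond t(\bar a)$ for $\diamond\in\{=,<,>\}$ and $y\equiv_m t(\bar a)$ are preserved by truncation once a single test coordinate is added to $T$, since order is decided at $\min supp$ and divisibility is coordinatewise.

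The genuine gap is exactly the step you label ``the main obstacle'': the locality claim is assumed rather than proved, and as formulated it is not accurate. First, the Gurevich--Schmitt (or Cluckers--Halupczok) elimination is only a \emph{relative} quantifier elimination: main-sort quantifiers are traded for quantifiers over the auxiliary sorts, so $\phi(\bar a,y)$ does not become a Boolean combination of atomic conditions in $y$ and $\bar a$; the residual formula still quantifies over definable families of convex subgroups, and your ``finitely many test coordinates'' would a priori depend on those bound auxiliary-sort variables. Handling this forces one to compare the induced structures on the two spines, which is the real content of Schmitt's argument and is absent here. Second, some of the relevant atomic conditions, e.g.\ congruences $y\equiv t(\bar a)$ modulo $mG+H$ for $H$ a convex subgroup, genuinely depend on \emph{all} coordinates of $y$ below a given index: truncation preserves them when true but can turn a false instance true unless $T$ contains a witnessing coordinate, so one must verify that finitely many witnesses serve simultaneously for all $m$ and all $H$ that occur --- true for $G_i=\Z$ by a gcd-stabilization argument, but not addressed, and not routine for arbitrary $G_i$, which is the generality the statement requires. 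As it stands the proposal is a plausible outline whose central step is missing.
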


We denote the successor of an ordinal $\alpha$ by $\alpha+1$ and, if a set $S$ has order type $\alpha$, the order type of the reverse order of $S$ is denoted by $\alpha^{*}$.

\section{Lexicographic products of $\Z$}

In this section, we consider the Hahn product of the group of integers, $\Z$, with the usual order over a well-ordered set $I$ with $\abs{I}>1$. Henceforth, we identify $I$ with the corresponding ordinal $\alpha$, and $\Lambda$ and $\Gamma$ will denote $\mathscr{H}_{i < \alpha}\Z$ and $\sum_{i<\alpha}\Z$, respectively. Note that, since $\alpha$ is well-ordered, the domain of $\Lambda$ coincides with the direct product $\prod_{i < \alpha}\Z$. This allows us to refer to the Hahn product $\mathscr{H}_{i < \alpha}\Z$ as the lexicographic product of $\Z$ over $\alpha$. We assume that $\alpha>1$ since the case $\alpha=1$ is already studied in \cite{cluckers}.

If $f \in \Lambda$, $f \neq 0$, then we define $v(f)=\min supp(f)$. Furthermore, we add the symbol $\infty$ to $\alpha$ and put $i<\infty$ for all $i < \alpha$. Hence, we set $v(0)=\infty$. One can easily prove the following properties.
\begin{fact}
\label{propv}
(i) $v(f)=v(-f)$ for any $f \in \Lambda$; \\
(ii) $v(f+g) \ge \min\{v(f),v(g)\}$ for any $f,g \in \Lambda$; 
\\
(iii) Let $f, g \in \Lambda$ such that $0<g<f$. Then $v(f) \le v(g)$.
\end{fact}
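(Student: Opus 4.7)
The plan is as follows. Parts (i) and (ii) reduce to elementary observations about supports in the well-ordered index set $\alpha$. For (i), I would note that $(-f)(i) = -f(i)$ vanishes exactly when $f(i)$ does, so $\mathrm{supp}(-f) = \mathrm{supp}(f)$; taking minima gives $v(-f) = v(f)$, with both equal to $\infty$ if $f = 0$. For (ii), the inclusion $\mathrm{supp}(f+g) \subseteq \mathrm{supp}(f) \cup \mathrm{supp}(g)$ holds by contrapositive: if $f(i) = g(i) = 0$ then $(f+g)(i) = 0$. Since the minimum of a subset of a well-ordered set is at least the minimum of the ambient set, we get $v(f+g) \ge \min\{v(f), v(g)\}$; the degenerate cases where one or both of $f, g$ is $0$ are handled by the convention $i < \infty$ for all $i < \alpha$.

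The substantive assertion is (iii), and my plan is to argue by contradiction using the definition of the lexicographic order on $\Lambda$. Suppose, toward a contradiction, that $v(g) < v(f)$, and set $i_0 := v(g)$. Since $0 < g$, the definition of the order forces $g(i_0) > 0$. I would then examine $h := f - g$ coordinatewise: for every $j < i_0$, both $f(j)$ and $g(j)$ vanish (as $j$ lies below both $v(f)$ and $v(g)$), so $h(j) = 0$; and at $i_0$ we have $h(i_0) = f(i_0) - g(i_0) = -g(i_0) < 0$, because $i_0 < v(f)$ forces $f(i_0) = 0$. Hence $v(h) = i_0$ and $h(v(h)) < 0$, which by definition of the order on $\Lambda$ means $h < 0$, i.e.\ $f < g$. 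This contradicts the hypothesis $g < f$, so $v(f) \le v(g)$.

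There is essentially no obstacle beyond carefully unwinding the definition of the Hahn order. The only mild subtlety is ensuring that the contradiction is extracted at the minimal index of $\mathrm{supp}(h)$ rather than at some smaller one; this is guaranteed by the joint vanishing of $f$ and $g$ below $v(g)$, which forces the first nonzero coordinate of $h$ to appear precisely at $i_0$. Everything else is purely formal manipulation of supports in a well-ordered index set.
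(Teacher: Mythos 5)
Your proof is correct. The paper states this Fact without proof (``One can easily prove the following properties''), and your argument --- support bookkeeping for (i) and (ii), and for (iii) locating the first nonzero coordinate of $f-g$ precisely at $v(g)$ under the assumption $v(g)<v(f)$ to derive $f<g$ --- is exactly the routine verification the author intends the reader to supply.
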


Let $i < \alpha$, $e_{\{i\}}$ will denote the following element of $\Lambda$:
\begin{equation}
\label{basisGamma}
(e_{\{i\}}(j))_{j < \alpha} \text{ where } e_{\{i\}}(j) =
\begin{cases}
1 \text{ if } j=i \\
0 \text{ otherwise}
\end{cases}
\end{equation}
Note that, for every $i,j < \alpha$ such that $i<j$, $0<e_{\{j\}} < e_{\{i\}}$. Moreover, the family $\{e_{\{i\}}\}_{i < \alpha}$ generates $\Gamma=\sum_{i<\alpha}\Z$ as an abelian group.

\begin{prop}
\label{order}
If $\alpha=\beta+1$ for some ordinal $\beta$, then $\Lambda=\mathscr{H}_{i<\alpha}\Z$ is discrete and, in particular, $e_{\{\beta\}}$ is the minimal positive element of $\Lambda$. Otherwise, if $\alpha$ is a limit ordinal, $\Lambda$ is dense.
\end{prop}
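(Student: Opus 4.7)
The plan is to split on whether $\alpha$ is a successor or a limit, and in each case reduce the question about (non)existence of a minimal positive element to a simple coordinate-wise comparison using the definition of the lexicographic order via $v$.

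Suppose first that $\alpha = \beta + 1$. I will show directly that $e_{\{\beta\}}$ is the smallest strictly positive element of $\Lambda$. Let $f \in \Lambda$ with $f > 0$, and set $i = v(f)$. Since $\alpha = \beta + 1$, we have $i \le \beta$, and because $f > 0$ its first nonzero coordinate satisfies $f(i) \ge 1$. I then compare $f$ with $e_{\{\beta\}}$ coordinate-wise. For $j < i$ both $f(j)$ and $e_{\{\beta\}}(j)$ vanish. If $i < \beta$, then $e_{\{\beta\}}(i) = 0$ while $f(i) \ge 1$, so $v(f - e_{\{\beta\}}) = i$ with positive leading coordinate and hence $f > e_{\{\beta\}}$. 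If $i = \beta$, then $(f - e_{\{\beta\}})(\beta) = f(\beta) - 1 \ge 0$, so $f \ge e_{\{\beta\}}$. In either case $f \ge e_{\{\beta\}}$, proving that $e_{\{\beta\}}$ is minimal positive and so $\Lambda$ is discrete.

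Now suppose $\alpha$ is a limit ordinal. I want to show that between any two elements of $\Lambda$ there is a third; by translation invariance of the order (part of the definition of ordered abelian group) it suffices to prove that $\Lambda$ has no minimal positive element, since then for any $f_1 < f_2$ one can pick $0 < g < f_2 - f_1$ and obtain $f_1 < f_1 + g < f_2$. So let $f \in \Lambda$ with $f > 0$ and set $i = v(f) < \alpha$. Because $\alpha$ is a limit ordinal, there exists $j$ with $i < j < \alpha$. Then $e_{\{j\}} \in \Lambda$ satisfies $e_{\{j\}} > 0$, and the comparison $f - e_{\{j\}}$ has its minimum support equal to $i$ with value $f(i) \ge 1 > 0$, hence $e_{\{j\}} < f$. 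Thus no positive element is minimal, and $\Lambda$ is dense.

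I do not expect a genuine obstacle: both parts reduce to reading off the leading coordinate of a difference. The only point requiring a little care is the successor case, where one must separately handle $i < \beta$ and $i = \beta$ to verify $f \ge e_{\{\beta\}}$; and in the limit case one must remember to translate the statement "no minimal positive element" into density via the group operation, which is immediate from compatibility of $<$ with $+$.
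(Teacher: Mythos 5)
Your proof is correct and takes essentially the same approach as the paper: in the successor case both arguments reduce to the observation that a positive $f$ has integer leading coordinate $f(v(f))\ge 1$, forcing $f\ge e_{\{\beta\}}$, and in the limit case both exhibit $0<e_{\{j\}}<f$ for some $j>v(f)$. Your explicit translation from ``no minimal positive element'' to order-density is harmless but not needed, since the paper defines \emph{dense} simply as the absence of a minimal positive element.
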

\begin{proof}
Let $\alpha=\beta+1$ and suppose $f \in \Lambda$ is such that $0 < f \le e_{\{\beta\}}$. Then $v(f)=\beta$ and $0<f(\beta) \le 1$. Since $f(\beta) \in \Z$, $f(\beta)=1$ and $f=e_{\{\beta\}}$. \\
Let $\alpha$ be a limit ordinal and $f \in \Lambda$, $f>0$. Let $i=v(f)$. Then $f(i)>0$ and $f(j)=0$ for every $j<i$. Clearly, $0<e_{\{i+1\}}<f$.
\end{proof}
Note that, since $\sum_{i<\alpha}\Z \preceq \mathscr{H}_{i<\alpha}\Z$, if $\alpha=\beta+1$, $e_{\{\beta\}}$ is the minimal positive element of $\Gamma=\sum_{i<\alpha}\Z$. Otherwise, $\sum_{i<\alpha}\Z$ is dense.

Now we aim at characterizing the convex subgroups of $\Lambda$. To this purpose, for every $i \in \alpha \cup \{\infty\}$, define $\Lambda_i=\Set{f \in \Lambda \mid v(f) \ge i}$. Trivially, $\Lambda_0=\Lambda$, $\Lambda_{\infty}=\{0\}$ and, for every $0<i<\infty$, $\Lambda_i$ is a (non-trivial) convex subgroup of $\Lambda$. Let
\begin{equation*}
F \colon i \in \alpha \cup \{\infty\} \mapsto \Lambda_i \in \mathcal{CS}(\Lambda)
\end{equation*}
It is clear that $F$ is injective and order-reversing: $i \le j$ implies $\Lambda_j \subseteq \Lambda_i$, for every $i,j \in \alpha \cup \{\infty\}$. The following proposition implies that $F$ is also surjective.

We recall that an endsegment of $\alpha \cup \{\infty\}$ is a subset of $\alpha \cup \{\infty\}$ that is closed upward. Let $\mathcal{ES}(\alpha \cup \{\infty\})$ denote the set of all endsegments of $\alpha \cup \{\infty\}$.

\begin{prop}
\label{convexsub}
There exists an order-preserving bijection between the set of all endsegments of $\alpha \cup \{\infty\}$ and the set of all convex subgroups of $\Lambda=\mathscr{H}_{i<\alpha}\Z$.
\end{prop}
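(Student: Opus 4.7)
The plan is to define a map $\Phi: \mathcal{ES}(\alpha \cup \{\infty\}) \to \mathcal{CS}(\Lambda)$ by $\Phi(E) := \{f \in \Lambda \mid v(f) \in E\}$, and prove that $\Phi$ is an order-preserving bijection whose inverse sends $H$ to $\{v(f) \mid f \in H\}$. Since $\alpha \cup \{\infty\}$ is well-ordered, every non-empty endsegment has the form $[i, \infty]$ for $i$ its minimum, and then $\Phi([i, \infty])$ coincides with the convex subgroup $\Lambda_i$ already introduced; so the proposition is equivalent to the surjectivity of the map $F$ defined before the statement.

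I would first check that $\Phi(E)$ is a convex subgroup: closure under sum and inverse is immediate from Fact \ref{propv}(i)(ii) together with the upward-closure of $E$, and convexity follows directly from Fact \ref{propv}(iii), since $0 < g < h \in \Phi(E)$ forces $v(g) \ge v(h) \in E$, hence $v(g) \in E$. Order-preservation of $\Phi$ is clear from the definition, and injectivity is witnessed by the basis elements: if $i \in E_1 \setminus E_2$ with $i < \infty$, then $e_{\{i\}}$ lies in $\Phi(E_1) \setminus \Phi(E_2)$.

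The substantive part is surjectivity. Let $H$ be a convex subgroup of $\Lambda$. If $H = \{0\}$, take $E = \{\infty\}$. Otherwise the set $\{v(f) \mid f \in H,\ f \ne 0\}$ is a non-empty subset of the ordinal $\alpha$, so has a minimum $i_0$; I claim $H = \Lambda_{i_0}$. The inclusion $H \subseteq \Lambda_{i_0}$ is the very definition of $i_0$. For the reverse, pick $f \in H$ with $v(f) = i_0$ and $f > 0$ (replacing $f$ by $-f$ via Fact \ref{propv}(i) if necessary), and let $g \in \Lambda_{i_0}$ with $g > 0$. I aim to produce $n \in \mathbb{N}$ with $0 < g < nf$, since then $nf \in H$ and convexity of $H$ gives $g \in H$. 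Both $f$ and $g$ vanish on positions below $i_0$ and $f(i_0) \ge 1$, so choosing $n$ so large that $n f(i_0) > g(i_0)$ makes $nf$ and $g$ first differ at $i_0$ with $(nf)(i_0) > g(i_0)$; hence $nf > g$ by the definition of the lexicographic order. The case $g < 0$ is handled by passing to $-g$.

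I expect the main obstacle to be precisely this last archimedean-type argument: one must check that convexity of $H$ really absorbs every element of valuation $\ge i_0$, which hinges on all coordinates below $i_0$ being zero and the $i_0$-th coordinate carrying $\mathbb{Z}$-valued data, so that a single integer multiple of $f$ dominates $g$. Once this is in place the remaining verifications are formal bookkeeping.
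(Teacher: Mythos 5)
Your proof is correct and follows essentially the same route as the paper: the same map $E\mapsto\{f\in\Lambda\mid v(f)\in E\}$, with surjectivity reduced to showing every nonzero convex subgroup equals some $\Lambda_{i_0}$. In fact you supply more detail than the paper at the one genuinely substantive point, the archimedean argument (choosing $n$ with $nf(i_0)>g(i_0)$, which uses that the components are $\Z$) needed to verify that the composite $K(H(\Delta))=\Delta$, a step the paper asserts without proof.
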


\begin{proof}
Let $C$ be an endsegment of $\alpha \cup \{\infty\}$ and $\Delta_{C}= \Set{f \in \Lambda \mid v(f) \in C}$. By Fact \ref{propv}, $\Delta_C$ is  a convex subgroup of $\Lambda$. Consider the function
\begin{equation*}
K \colon C \in \mathcal{ES}(\alpha \cup \{\infty\}) \mapsto \Delta_{C} \in \mathcal{CS}(\Lambda).
\end{equation*}
Trivially, if $C_1, C_2 \in \mathcal{ES}(\alpha \cup \{\infty\})$ are such that $C_1 \subseteq C_2$, then $\Delta_{C_1} \subseteq \Delta_{C_2}$. So $K$ is order-preserving.

Let $\Delta$ be a convex subgroup of $\Lambda$. Then $C_{\Delta}= \Set{v(f) \in \alpha \cup \{\infty\} \mid f \in \Delta}$ is an endsegment of $\alpha \cup \{\infty\}$. Indeed, $\infty \in C_{\Delta}$, and let $k < \alpha$ with $j < k$ for some $j \in C_{\Delta}$. Then there exists $f \in \Delta$ such that $j=v(f)$ and, without loss of generality, we can suppose $f>0$. Clearly, $0<e_{\{k\}}<f$ and $v(e_{\{k\}})=k$. By convexity of $\Delta$, $e_{\{k\}} \in \Delta$ and $k \in C_{\Delta}$. Therefore, the function
\begin{equation*}
H \colon \Delta \in \mathcal{CS}(\Lambda) \mapsto C_{\Delta} \in \mathcal{ES}(\alpha \cup \{\infty\}).
\end{equation*} 
is order-preserving. Moreover, $H(K(C))=C$ and $K(H(\Delta))=\Delta$ for every $C \in \mathcal{ES}(\alpha \cup \{\infty\})$ and $\Delta \in \mathcal{CS}(\Lambda)$. Then, $H=K^{-1}$, and so $K$ is bijective.
\end{proof}

Note that any endsegment $C$ of $\alpha \cup \{\infty\}$ is either $\{\infty\}$ or $C=\Set{j < \alpha \mid j \ge i} \cup \{\infty\}$ for some $i < \alpha$. This implies that the $\Lambda_i$'s are exactly all the convex subgroups of $\Lambda$.

By Proposition \ref{convexsub}, we can determine the order type of the set of all proper convex subgroups of $\Lambda$. 

\begin{cor}
\label{corconvexsub}
For every ordinal $\alpha>1$, $\Lambda=\mathscr{H}_{i<\alpha}\Z$ is of rank $(\alpha+1)^*$. \\ In particular, if $\alpha$ is finite, i.e. $\alpha=n$ for some $n \in \mathbb{N}$, $n>1$, there are exactly $n$ proper convex subgroups of $\Lambda$.
\end{cor}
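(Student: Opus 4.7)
The plan is to derive the corollary directly from Proposition \ref{convexsub} by determining the order type of the endsegments of $\alpha \cup \{\infty\}$.

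First, I would parameterize the non-empty endsegments of $\alpha \cup \{\infty\}$. Since $\alpha \cup \{\infty\}$ is well-ordered of type $\alpha + 1$, every non-empty endsegment $E$ has a minimum, and conversely for each $i \in \alpha \cup \{\infty\}$ the set $E_i = \{j \in \alpha \cup \{\infty\} \mid j \ge i\}$ is an endsegment with $\min E_i = i$. Hence the map $i \mapsto E_i$ is a bijection from $\alpha \cup \{\infty\}$ onto the set of non-empty endsegments, and it is order-reversing: $E_j \subseteq E_i$ holds if and only if $i \le j$. Note moreover that the empty endsegment does not correspond to a subgroup, so it can safely be discarded.

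Composing this with the order-preserving bijection of Proposition \ref{convexsub}, I conclude that the chain $\mathcal{CS}(\Lambda)$ ordered by inclusion is in order-reversing bijection with $\alpha \cup \{\infty\}$, so its order type is the reverse of $\alpha + 1$, namely $(\alpha + 1)^*$. This gives the rank statement. For the \emph{in particular} part, when $\alpha = n \in \mathbb{N}$ with $n > 1$, the index set $\alpha \cup \{\infty\}$ has exactly $n + 1$ elements, so there are $n + 1$ convex subgroups of $\Lambda$ in total; removing the whole group $\Lambda = \Lambda_0$ itself leaves exactly $n$ proper convex subgroups.

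There is no real obstacle: the corollary is essentially an immediate consequence of Proposition \ref{convexsub}, requiring only the elementary observation that non-empty endsegments of a well-ordered set are parameterized in an order-reversing way by their minima.
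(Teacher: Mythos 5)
Your argument is correct and follows essentially the same route as the paper, which likewise reads the corollary off Proposition \ref{convexsub} via the observation that every nonempty endsegment of $\alpha \cup \{\infty\}$ is determined by its minimum, giving an order-reversing parameterization by $\alpha \cup \{\infty\}$ of order type $\alpha+1$. (The only cosmetic point is that the rank is defined via \emph{proper} convex subgroups, so strictly one should discard $\Lambda_0=\Lambda$ before reading off the order type --- exactly the adjustment you make explicitly in the finite case.)
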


Note that, since
\begin{equation*}
\mathcal{CS}(\Gamma)=\Set{ \Delta \cap \Gamma \mid \Delta \in \mathcal{CS}(\Lambda)},
\end{equation*}
the sets $\Gamma_i=\Set{f \in \Gamma \mid v(f) \ge i}$  are exactly all the convex subgroups of $\Gamma$, and Proposition \ref{convexsub} and Corollary \ref{corconvexsub} hold also for $\Gamma=\sum_{i<\alpha}\Z$. 

\begin{prop}
\label{defconvexsub}
The family $\{\Lambda_{i+1}\}_{i+1<\alpha}$ is uniformly definable.  
\end{prop}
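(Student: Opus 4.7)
The plan is to exhibit a single quantifier-free $L_{oag}$-formula $\phi(x,y)$ in one parameter variable $y$ such that, for each $i$ with $i+1<\alpha$, the parameter $y=e_{\{i\}}$ satisfies $\phi(\Lambda, e_{\{i\}})=\Lambda_{i+1}$. The natural choice is
\[
\phi(x,y) \;:=\; -y < x+x \;\land\; x+x < y,
\]
which simply asserts that $2|x|<y$. (An equally valid but less elementary alternative would be $\phi(x,y):=x\in H_2(y)$, invoking the uniform definability of $H_2$ from \cite{immi} together with the equality $H_2(e_{\{i\}})=\Lambda_{i+1}$; the formula above, however, is self-contained and avoids that machinery.)

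The verification that $\phi(\Lambda,e_{\{i\}})=\Lambda_{i+1}$ is a case analysis on $v(x)$, using the fact that $e_{\{i\}}(i)=1$ and $e_{\{i\}}(j)=0$ for $j\ne i$. If $v(x)>i$, then $2|x|$ vanishes at every $j\le i$, so $2|x|$ and $e_{\{i\}}$ agree on all $j<i$ and first differ at position $i$, where $(2|x|)(i)=0<1=e_{\{i\}}(i)$; hence $2|x|<e_{\{i\}}$. If $v(x)=i$, they still agree on all $j<i$, but $(2|x|)(i)=2|x(i)|\ge 2>1=e_{\{i\}}(i)$, so $2|x|>e_{\{i\}}$. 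Finally, if $v(x)<i$, then $(2|x|)(v(x))>0=e_{\{i\}}(v(x))$ already forces $2|x|>e_{\{i\}}$. Combining the three cases: $2|x|<e_{\{i\}}$ holds iff $v(x)>i$, i.e., iff $x\in\Lambda_{i+1}$.

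This yields the uniform definition $\Lambda_{i+1}=\phi(\Lambda, e_{\{i\}})$ for every $i+1<\alpha$. The proof is essentially an unwinding of the lexicographic order on $\Lambda$, so no real obstacle arises; the only point that needs to be gotten right is the choice of the coefficient $2$, which is exactly what forces the $i$-th coordinate of $2|x|$ past the value $1=e_{\{i\}}(i)$ whenever $v(x)\le i$, while leaving it zero when $v(x)>i$.
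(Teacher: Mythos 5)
Your proof is correct. The verification that $-e_{\{i\}} < 2x < e_{\{i\}}$ holds exactly when $v(x) \ge i+1$ is sound in all three cases (including $x=0$), and the formula is visibly uniform in the parameter $e_{\{i\}}$, which is all the statement requires. Your route differs from the paper's: the paper writes $\Lambda_{i+1}=H_2(e_{\{i\}})$ and appeals to the uniform definability of the subgroups $H_n(\cdot)$ from Halupczok's framework, verifying only that every element of $e_{\{i\}}+2\Lambda$ has $v \le i$; you instead produce an explicit quantifier-free $L_{oag}$-formula and check the set equality directly. Both hinge on the same mechanism --- the coefficient $2$ forces the $i$-th coordinate past $e_{\{i\}}(i)=1$ whenever $v(x)\le i$ --- so the mathematical content is essentially the same observation packaged differently. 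What your version buys is self-containedness and an explicitly quantifier-free defining formula; what the paper's version buys is consistency with the $H_n$, $H_n^+$ machinery it introduces earlier and reuses immediately afterwards (the definability of $\Lambda_i$ for limit $i$ is obtained as $H_2^+(e_{\{i\}})$, a step your elementary formula does not directly generalize to). Either proof is acceptable for the proposition as stated.
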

\begin{proof}
Let $i < \alpha$ be such that $i+1<\alpha$. Then $\Lambda_{i+1}=H_2(e_{\{i\}})$ and, so, is definable. Indeed, for every $f \in e_{\{i\}}+2\Lambda$, we have $v(f) \le i$. Hence, $H_2(e_{\{i\}})=\Set{f \in \Lambda \mid v(f) > i} = \Set{f \in \Lambda \mid v(f) \ge i+1}=\Lambda_{i+1}$.
\end{proof}

More generally, for every $f \in \Lambda \setminus n\Lambda$, we have
\begin{equation*}
H_n(f)=
\begin{cases}
\Lambda_{i+1} & \text{if $i+1<\alpha$} \\
\{0\} & \text{otherwise} 
\end{cases}
\end{equation*}
where $i=\min \Set{j < \alpha \mid f(j) \notin n\Z}$.

Now we are able to prove 
\begin{cor}
All convex subgroups of $\Lambda$ are definable. 
\end{cor}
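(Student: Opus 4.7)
The strategy is to invoke the classification from Proposition \ref{convexsub}: the convex subgroups of $\Lambda$ are precisely $\Lambda_0 = \Lambda$, $\Lambda_\infty = \{0\}$, and $\Lambda_i$ for $i \in \alpha$. The first two are definable by the formulas $x = x$ and $x = 0$; the successor case $\Lambda_{j+1}$ with $j+1 < \alpha$ has already been handled by Proposition \ref{defconvexsub} through $H_2(e_{\{j\}})$. The only remaining work is to exhibit a definition for $\Lambda_i$ when $i < \alpha$ is a limit ordinal, and the natural tool to try is the definable subgroup $H_2^+(e_{\{i\}})$.

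The key claim is that $H_2^+(e_{\{i\}}) = \Lambda_i$ for any limit $i < \alpha$. Using the explicit description $H_2(g) = \Lambda_{k+1}$ with $k = \min\{j < \alpha \mid g(j) \notin 2\Z\}$ (and $\{0\}$ when $k+1 = \alpha$) recorded just after Proposition \ref{defconvexsub}, the subgroups $H_2(g)$ entering the intersection defining $H_2^+(e_{\{i\}})$ are exactly the $\Lambda_{k+1}$ with $k < i$: indeed, $e_{\{i\}} \in \Lambda_{k+1}$ rewrites as $k+1 \le i$, i.e., $k < i$, because $i$ is a limit ordinal; note also that $k+1 \le i < \alpha$ automatically forces $k+1 < \alpha$. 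Hence
\[
H_2^+(e_{\{i\}}) = \bigcap_{k<i} \Lambda_{k+1} = \Lambda_i,
\]
where the last equality uses that $\{k+1 \mid k < i\}$ is cofinal in $i$.

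The main obstacle is the non-collapse of this intersection: verifying that no $h \in \Lambda$ with $v(h) < i$ lies in every $H_2(g)$ contributing to $H_2^+(e_{\{i\}})$. The natural witness is $g = e_{\{v(h)\}}$, which lies in $\Lambda \setminus 2\Lambda$; the associated $H_2(g)$ equals $\Lambda_{v(h)+1}$ (since $v(h)+1 \le i < \alpha$); we have $e_{\{i\}} \in H_2(g)$ because $v(e_{\{i\}}) = i > v(h)$, but $h \notin H_2(g)$ since $v(h) < v(h)+1$. Combined with the easy inclusion $\Lambda_i \subseteq H_2^+(e_{\{i\}})$ — holding because any element of valuation at least $i$ belongs to every $\Lambda_{k+1}$ with $k < i$ — the equality $H_2^+(e_{\{i\}}) = \Lambda_i$ follows, and definability is concluded since $H_2^+(x)$ is uniformly definable with parameter $x$.
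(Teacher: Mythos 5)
Your proposal is correct and follows the paper's own route: reduce via Proposition \ref{convexsub} and Proposition \ref{defconvexsub} to the case of a limit ordinal $i<\alpha$, and then identify $\Lambda_i$ with $H_2^+(e_{\{i\}})$ as the intersection $\bigcap_{k<i}\Lambda_{k+1}$. You merely spell out in more detail (correctly) why the subgroups $H_2(g)$ entering the definition of $H_2^+(e_{\{i\}})$ are exactly the $\Lambda_{k+1}$ with $k<i$, which the paper leaves implicit.
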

\begin{proof}
We have only to show the definability of $\Lambda_i$ for $i$ limit ordinal. Let $i<\alpha$ be a limit ordinal, then
\begin{equation*}
\Lambda_i=\underset{\Set{j<\alpha\mid j<i}}{\bigcap}\Lambda_j=\underset{\Set{j<\alpha\mid j<i \text{ and $j$ is not limit }}}{\bigcap}\Lambda_j=H_2^+(e_{\{i\}}).
\end{equation*}
\end{proof}
It is clear that the above statements hold also for $\Gamma=\sum_{i<\alpha}\Z$.

\subsection{The group of automorphisms of the lexicographic sum of $\Z$}

In this subsection, we will describe the group of automorphisms of $\Gamma=\sum_{i<\alpha}\Z$. 

First of all, we characterize the automorphisms of $\Gamma$ when $\alpha$ is finite, i.e. $\alpha=n$ for some natural number $n>1$. In this case, $\Gamma=\sum_{0 \le i \le n-1}\Z = \mathscr{H}_{0 \le i \le n-1}\Z$, and we will denote $\Gamma$ by its domain $\Z^n$. For any $z \in \Z^n$, let $z(i)=z_i$ for every $0 \le i \le n-1$. Note that $\Z^n$ is a discrete ordered abelian group. Therefore, every element of the convex subgroup 
\begin{equation*}
{\Z^n}_{n-1} = \Set{ z \in \Z^n \mid v(z) \ge n-1} = \Set{(0,\dots, 0, m) \mid m \in \Z}
\end{equation*}
is $0$-definable. Moreover, each convex subgroup ${\Z^n}_i$ of $\Z^n$ is $0$-definable, as it is proved in the following lemma.

\begin{lemma}
	\label{defconvex}
	For every $0 \le i \le n-1$, ${\Z^n}_i=\Set{z \in \Z^n \mid v(z) \ge i}$ is $0$-definable.
\end{lemma}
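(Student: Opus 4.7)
The approach I would take is induction on $i$, going from $i=0$ up to $i=n-1$. The base case $i=0$ is immediate, since ${\Z^n}_0 = \Z^n$ is $0$-definable by $x=x$. For the inductive step, assuming ${\Z^n}_{i-1}$ has been $0$-defined by some formula $\psi_{i-1}(y)$, I would define ${\Z^n}_i$ using $\psi_{i-1}$ together with the uniformly $0$-definable family $\{H_2(f)\}_{f \in \Z^n \setminus 2\Z^n}$ recalled in Section~1 (following \cite{immi}).

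The central identity to prove is
\[
{\Z^n}_i \;=\; \bigcup \Set{H_2(f) \mid f \in {\Z^n}_{i-1} \text{ and } f \notin 2\,{\Z^n}_{i-1}}.
\]
For the inclusion $\supseteq$: if $f \in {\Z^n}_{i-1} \setminus 2\,{\Z^n}_{i-1}$ then some coordinate of $f$ is odd, so $f \notin 2\Z^n$ and $H_2(f)$ is well-defined; moreover, as $f \in {\Z^n}_{i-1}$ its first $i-1$ coordinates vanish, so the least $j_0$ with $f(j_0) \notin 2\Z$ satisfies $j_0 \ge i-1$. The explicit case description of $H_n$ displayed just before the lemma then forces $H_2(f) = {\Z^n}_{j_0+1} \subseteq {\Z^n}_i$ (or $\{0\}$ when $j_0 = n-1$). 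For $\subseteq$: the choice $f = e_{\{i-1\}}$ lies in ${\Z^n}_{i-1} \setminus 2\,{\Z^n}_{i-1}$ and gives $H_2(f) = {\Z^n}_i$ itself.

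Granted this identity, and writing $\phi(x,y)$ for the formula expressing ``$y \in H_2(x)$'', ${\Z^n}_i$ is then defined without parameters by
\[
\psi_i(z) \;:=\; \exists f\, \bigl[\psi_{i-1}(f) \land \lnot \exists g\,(f = 2g \land \psi_{i-1}(g)) \land \phi(f, z)\bigr],
\]
closing the induction.

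I do not anticipate a genuine obstacle here: the entire argument reduces to the set-theoretic identity above, and that identity follows directly from the explicit coordinate description of $H_n$ stated in the paragraph immediately before the lemma. The one piece imported from outside is the uniform $0$-definability of the family $\{H_2(f)\}$, which the paper already treats as standard through the citation to \cite{immi}; everything else is a direct bootstrap on the chain of already-definable convex subgroups.
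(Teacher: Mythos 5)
Your argument is correct, but it takes a genuinely different route from the paper. The paper's proof is direct and non-inductive: it fixes a prime $p$ and characterizes ${\Z^n}_i$ by counting congruence classes, namely $z \in {\Z^n}_i$ if and only if the elements of $[-z,z]$ meet at most $p^{n-i}$ classes modulo $p\Z^n$; this uses nothing beyond the $0$-definability of $\equiv_p$ and yields a single explicit formula for each $i$. Your proof instead bootstraps up the chain of convex subgroups, and the key identity ${\Z^n}_i = \bigcup\{H_2(f) \mid f \in {\Z^n}_{i-1},\ f \notin 2{\Z^n}_{i-1}\}$ does check out: for such $f$ the least index $j_0$ with $f(j_0)$ odd satisfies $j_0 \ge i-1$, so by the displayed case description of $H_n$ one gets $H_2(f) = {\Z^n}_{j_0+1} \subseteq {\Z^n}_i$ (or $\{0\}$), while $f = e_{\{i-1\}}$ realizes ${\Z^n}_i$ itself; also, for $f \in {\Z^n}_{i-1}$ the conditions $f \notin 2{\Z^n}_{i-1}$ and $f \notin 2\Z^n$ coincide, so your formula matches the identity. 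In effect you are producing a parameter-free version of Proposition~\ref{defconvexsub} (which gives ${\Z^n}_{i+1} = H_2(e_{\{i\}})$ only with the parameter $e_{\{i\}}$) by existentially quantifying the parameter over a previously $0$-defined set. What the paper's route buys is self-containedness --- it does not lean on the uniform $0$-definability of the family $\{H_2(f)\}$, which is imported from \cite{immi} and never spelled out as a formula in this paper --- whereas your route is more structural and makes the link to the $H_2$ machinery explicit; both are valid, and your dependence on \cite{immi} is consistent with how the paper uses that family elsewhere.
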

\begin{proof}
For $i=0$ it is trivial. So, let $0 < i \le n-1$ and fix a prime $p$. For every $z \in \Z^n$, $z \in {\Z^n}_i$ if and only if the set $\Set{w+p\Z^n \mid -z \le w \le z}$ of the $\equiv_p$ - equivalence classes of elements in the interval $[-z,z]$ has cardinality at most $p^{n-i}$. Indeed, let $z \in \Z^n_i$. Then $[-z,z] \subseteq (-e_{\{i-1\}},e_{\{i-1\}})$. Since, for every $0 \le j \le n-1$ the $\equiv_p$ - equivalence classes in $(-e_{\{j\}},e_{\{j\}})$ are exactly $p^{n-1-j}$, it follows that $\abs{\Set{w+p\Z^n \mid -z \le w \le z}} \le p^{n-i}$. Conversely, if $z \notin \Z^n_i$, then $v(z)<i$ and $[-z,z]$ contains the interval $[-e_{\{i-1\}},e_{\{i-1\}}]$. Therefore $|\Set{w+p\Z^n \mid -z \le w \le z}| \ge p^{n-i}+1$.

Clearly, the set of $z \in \Z^n$ such that $\abs{\Set{w+p\Z^n \mid -z \le w \le z}} \le p^{n-i}$ is $0$-definable.
\end{proof}

Consider the following upper triangular matrix of size $n \times n$, whose elements are in $\Z$
\begin{equation}
\label{triangmatrix}
K_n =
\begin{pmatrix}
1  & k_{12} & \dots & k_{1n} \\
0 & 1 & \ddots & \vdots \\
\vdots & \ddots & \ddots & k_{n-1n}\\
0 & \dots & 0 & 1
\end{pmatrix}
.
\end{equation}

Note that $K_n$ is invertible and its inverse is an upper triangular matrix on $\Z$ of the same form, with all the entries of the main diagonal equal to $1$. Then the function 
\begin{equation*}
f_{K_n} \colon z \in \Z^n \mapsto zK_n \in \Z^n
\end{equation*}
where $zK_n$ is the matrix product of the row vector $z=(z_1,\dots,z_n)$ and $K_n$, is a group automorphism of $\mathbb{Z}^n$. Moreover, it is straightforward to show that $f_{K_n}$ is order-preserving. Therefore, every matrix as in (\ref{triangmatrix}) determines an automorphism of the ordered group $\Z^n$. We now prove that every automorphism of $\mathbb{Z}^n$ is obtained in this way and it can be represented by a matrix as in (\ref{triangmatrix}).

Note that, since $\Z^n$ is a free module of rank $n$, every group automorphism $\phi$ of $\Z^n$ is given by an invertible matrix $M \in GL_n(\Z)$. In other words, there exists an $n \times n$ matrix $M$ over $\Z$ with $\text{det}(M) = \pm 1$ such that $\phi(z)=zM$ for every $z \in \Z^n$. Indeed, the matrix
\begin{equation}
\label{automatrix}
M =
\begin{pmatrix}
\phi(e_{\{1\}}) \\
\vdots \\
\phi(e_{\{n\}})
\end{pmatrix}
\end{equation}
represents $\phi$ in the above sense. Hence, it suffices to prove that if $\phi$ preserves the order on $\Z^n$, then $M$ is of the form (\ref{triangmatrix}). 

Let $0 \le i \le n-1$ and $m$ be any element in $\Z \setminus \{0\}$, consider 
\begin{equation*}
A_{i,m}= \Set{ z \in \Z^n \mid v(z)=i \text{ and } z_{v(z)}=m}.
\end{equation*}
Note that, for $i=n-1$, $A_{n-1,m}$ is the singleton of $(0,\dots,0,m)$. For $i=0$, $A_{0,m}$ is the set $\{m\} \times \Z^{n-1}$, and, for $0<i<n-1$, $A_{i,m}= \{0\}^i \times \{m\} \times \Z^{n-(i+1)}$.

\begin{prop}
\label{defin}
For every $0 \le i \le n-1$ and every $m \in \Z \setminus \{0\}$, the set $A_{i,m}$ is $0$-definable.
\end{prop}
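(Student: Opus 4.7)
The plan is to first reduce the problem to defining the ``unit coset'' $A_{i,1}$ of $\Z^n_{i+1}$ in $\Z^n_i$, from which every $A_{i,m}$ is easily recovered. By Lemma~\ref{defconvex} every convex subgroup $\Z^n_j$ (for $0 \le j \le n-1$) is $0$-definable; adopting the convention $\Z^n_n := \{0\}$, the ``level set'' $V_i := \Z^n_i \setminus \Z^n_{i+1} = \Set{z \in \Z^n \mid v(z) = i}$ is $0$-definable as well for every $0 \le i \le n-1$.

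The key step is to identify $A_{i,1}$ as the minimum positive coset of $\Z^n_{i+1}$ in $\Z^n_i$. Concretely, I would prove the equivalence
\begin{equation*}
z \in A_{i,1} \iff z > 0 \ \land \ v(z) = i \ \land \ \forall y\, (0 < y < z \land v(y) = i \Rightarrow z - y \in \Z^n_{i+1}).
\end{equation*}
One direction is immediate: if $z_i = 1$ and $0 < y < z$ with $v(y) = i$, then $y_i \ge 1$; lexicographic comparison forbids $y_i \ge 2$ (that would force $y > z$), so $y_i = 1 = z_i$, whence $(z - y)_i = 0$ and $z - y \in \Z^n_{i+1}$. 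Conversely, if $z > 0$ with $v(z) = i$ but $z_i \ge 2$, the element $y := e_{\{i\}}$ satisfies $0 < y < z$, $v(y) = i$ and $(z - y)_i \ne 0$, so $z - y \notin \Z^n_{i+1}$, contradicting the formula. Hence $z_i = 1$, and the displayed formula gives a $0$-definition of $A_{i,1}$.

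From $A_{i,1}$, for each $m \in \Z \setminus \{0\}$ I would define
\begin{equation*}
A_{i,m} = \Set{z \in V_i \mid \exists y\, (y \in A_{i,1} \ \land \ z - my \in \Z^n_{i+1})},
\end{equation*}
where $my$ abbreviates the $|m|$-fold sum or difference of $y$ with itself. Indeed, if $y_i = 1$ then $(my)_i = m$, so $z - my \in \Z^n_{i+1}$ is equivalent to $z_i = m$. The boundary cases $i = 0$ and $i = n-1$ (where $\Z^n_n = \{0\}$ collapses $A_{n-1,1}$ to the singleton $\{e_{\{n-1\}}\}$ supplied by Proposition~\ref{order}) slot into this scheme without modification. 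The main subtlety lies in isolating $A_{i,1}$: since $e_{\{i\}} - N e_{\{i+1\}} \in A_{i,1}$ for every integer $N$, the set $A_{i,1}$ has no order-minimum element inside $V_i$, so it cannot be picked out as a ``least positive element of valuation $i$''; rather, the extremal condition in the formula above encodes that $A_{i,1}$ is the minimum positive \emph{coset} of $\Z^n_{i+1}$, which is what makes the argument work.
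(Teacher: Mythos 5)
Your proposal is correct, and its core coincides with the paper's: both arguments isolate $A_{i,1}$ inside the $0$-definable level set $V_i = {\Z^n}_i \setminus {\Z^n}_{i+1}$ by the same extremal condition, namely that every positive $y \in V_i$ below $z$ satisfies $z-y \in {\Z^n}_{i+1}$ (your displayed formula is, up to a sign in the argument of the congruence, exactly the paper's $\psi_1$), and both verify it by the same two computations with $e_{\{i\}}$. Where you diverge is the passage from $A_{i,1}$ to $A_{i,m}$. The paper proceeds by induction on $m>0$: it introduces $\gamma_{m-1}(x)$, asserting that $x$ lies above every element of $A_{i,m-1}$, and defines $A_{i,m}$ as the least coset of ${\Z^n}_{i+1}$ in that upper set, handling negative $m$ separately via $A_{i,m} = -A_{i,-m}$. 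You instead use the group operation directly, defining $A_{i,m}$ by $\exists y\,(y \in A_{i,1} \wedge z - my \in {\Z^n}_{i+1})$, which yields a single non-recursive formula for each $m$, treats positive and negative $m$ uniformly, and absorbs the boundary case $i=n-1$ without special pleading. Your route is marginally cleaner and makes the coset structure explicit; the paper's induction leans on the order and foreshadows the ``least element above a definable set'' device. Both are complete proofs, and the verification you give for each direction of the $A_{i,1}$ equivalence is sound.
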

\begin{proof}
For $i=n-1$ it is clear. Let $0 \le i < n-1$ be fixed and consider the set $A_i=\Set{z \in \Z^n \mid v(z)=i}$. By Lemma \ref{defconvex}, since $A_i = {\Z^n}_i \setminus {\Z^n}_{i+1}$, $A_i$ is $0$-definable. Let $\alpha(x)$ and $\beta(x)$ be the formulas defining $A_i$ and ${\Z^n}_{i+1}$, respectively. Since $A_{i,m} = \Set{z \in \Z^n \mid -z \in A_{i,-m}}$, it suffices to prove the statement for $m>0$. If $m=1$, then $A_{i,1}$ is defined by the formula
\begin{equation*}
	\psi_1(x) \coloneq \alpha(x) \land 0<x \land \forall y \big ( (\alpha(y) \land 0<y \land y<x) \implies \beta(y-x) \big ).
\end{equation*}
Indeed, it is trivial that any element $z$ in $A_{i,1}$ satisfies $\psi_1(x)$. Conversely, let $x$ be an element of $\Z^n$ satisfying $\psi_1(x)$. Then $v(x)=i$ and $x_{v(x)} >0$. If $x_{v(x)} >1$, then $v(x-e_{\{i\}})=i$. Since $0<e_{\{i\}}<x$, we get a contradiction and $x_{v(x)}=1$. Let $m>1$ and suppose by induction that $A_{i,m-1}$ is defined by the formula $\psi_{m-1}(x)$. Consider the formula
\begin{equation*}
\gamma_{m-1}(x) \coloneq \forall y(\psi_{m-1}(y) \implies y<x).
\end{equation*}
Then, $A_{i,m}$ is defined by the formula
\begin{equation*}
\psi_m(x) \coloneq \alpha(x) \land \gamma_{m-1}(x) \land \forall y \big ( \gamma_{m-1}(y) \land y<x) \implies \beta(y-x) \big ).
\end{equation*}
Indeed, let $x$ in $\Z^n$ satisfy $\psi_m(x)$. Then $v(x)=i$ and $x_{v(x)} > m-1$. If $x_{v(x)} > m$, then $v(x-me_{\{i\}})=i$, and we get a contradiction. Therefore, $x_{v(x)}=m$ and $x \in A_{i,m}$.
\end{proof}

Now we are able to prove the following
\begin{theor}
	\label{carattautofinite}
	Let $\phi$ be an automorphism of the lexicographic sum $\mathbb{Z}^n$, where $n>1$. Then there exists an upper triangular matrix $K_n$ as in (\ref{triangmatrix}) such that $\phi = f_{K_n}$.
\end{theor}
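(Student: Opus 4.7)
The plan is to start from the matrix $M$ of (\ref{automatrix}), which already represents $\phi$ as a $\Z$-linear map via $\phi(z) = zM$, and to check that order preservation forces $M$ to have the shape (\ref{triangmatrix}). Concretely, for each index $i$ I need to prove that the row $\phi(e_{\{i\}})$ is of the form $(0, \ldots, 0, 1, k_{i,i+1}, \ldots, k_{i,n})$, with the $1$ sitting in position $i$; equivalently, that $\phi(e_{\{i\}}) \in A_{i,1}$.

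This is where Proposition \ref{defin} does essentially all the work. Since $A_{i,1}$ is $0$-definable, it is invariant under every automorphism of $\Z^n$. Moreover, $e_{\{i\}}$ itself satisfies $v(e_{\{i\}}) = i$ and has $i$-th coordinate equal to $1$, so $e_{\{i\}} \in A_{i,1}$, and hence $\phi(e_{\{i\}}) \in A_{i,1}$. Unpacking the definition of $A_{i,1}$ gives simultaneously that $v(\phi(e_{\{i\}})) = i$, so that the first $i$ entries of the $i$-th row of $M$ vanish, and that the leading (i.e.\ $i$-th) coordinate of $\phi(e_{\{i\}})$ equals $1$. Assembling these rows for $i = 1, \ldots, n$ yields $M = K_n$ of the required upper triangular form with $1$'s on the diagonal.

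I do not expect any serious obstacle, because the substantive preparatory work has already been carried out in Lemma \ref{defconvex} and Proposition \ref{defin}: those results translate the rigidity imposed on $\phi$ by order preservation into coordinatewise constraints on $\phi(e_{\{i\}})$, and the theorem then falls out in one step. The only points worth double-checking in writing are the row-vector convention in (\ref{automatrix}), which amounts to the standard identity $\phi\bigl(\sum_i z_i e_{\{i\}}\bigr) = \sum_i z_i \phi(e_{\{i\}})$, and the observation that the off-diagonal entries $k_{i,j}$ for $j > i$ are genuinely unconstrained, consistent with the remark before the theorem that every matrix of the form (\ref{triangmatrix}) does give an order-preserving automorphism.
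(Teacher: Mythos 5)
Your proposal is correct and follows essentially the same route as the paper: both invoke Proposition \ref{defin} to see that the $0$-definable set $A_{i,1}$ is fixed setwise by $\phi$, conclude $\phi(e_{\{i\}}) \in A_{i,1}$ from $e_{\{i\}} \in A_{i,1}$, and read off that the $i$-th row of $M$ has leading entry $1$ in position $i$ with zeros before it. The points you flag for double-checking (the row-vector convention and the freedom of the off-diagonal entries) are exactly the ones the paper handles in the discussion preceding the theorem.
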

\begin{proof}
For every $z \in \Z^n$, $\phi(z)=zM$, where $M$ is the matrix (\ref{automatrix}). By Proposition \ref{defin}, $\phi$ fixes $A_{i,1}$ setwise for every $0 \le i \le n-1$. Therefore, for any $0 \le i \le n-1$,  $v(\phi(e_{\{i\}}))=i$ and ${\phi(e_{\{i\}})}_i=1$, so the result is proved.
\end{proof}

Henceforth we focus on the case $\alpha \ge \omega$. Recall that the chain
\begin{equation}
\{0\} = \Gamma_{\infty} \subset \dots \subset \Gamma_n \subset \dots \subset \Gamma_1 \subset \Gamma_{0} = \Gamma = \sum_{i<\alpha}\Z
\end{equation}
represents the set of all convex subgroups of $\Gamma$. Note that, contrary to the finite case, it is not a well-ordered set, since it contains an infinite descending chain. 

Let $\phi$ be an automorphism of $\Gamma$. For every $f \in \Gamma$, there exists a finite subset $\Set{i_1,\dots,i_m}$ of $\alpha$ such that $f=\sum_{j=1}^{m}f(i_j)e_{\{i_j\}}$. Then, since $\phi$ is a group homomorphism, $\phi(f)=\sum_{j=1}^{m}f(i_j)\phi(e_{\{i_j\}})$. For every $i,k < \alpha$, we set
\begin{equation*}
m_{ik}=(\phi(e_{\{i\}}))(k).
\end{equation*}
Then, we have
\begin{equation*}
(\phi(f))(k)=\sum_{i< \alpha}f(i)m_{ik}
\end{equation*}
for every $k<\alpha$. In other words, also in the case of $\alpha$ infinite, we can associate to $\phi$ an infinite matrix $M=(m_{ik})_{i,k < \alpha}$ such that $\phi(f)=fM$ for every $f \in \Gamma$. Note that any row of $M$ has only finitely many nonzero elements.

In order to generalize the result of Theorem \ref{carattautofinite}, we now determine which conditions $M$ has to satisfy, for preserving the order on $\Gamma$. We will prove that also in the infinite case $M$ is upper triangular with all the entries of the main diagonal equal to $1$, i.e.
\begin{equation}
\label{triangmatrixinfinite}
m_{ik}=
\begin{cases}
0 \text{ if } i>k \\
1 \text{ if } i=k
\end{cases}
\end{equation}
 
\begin{lemma}
\label{fixingGammai}
Let $\phi$ be an automorphism of $\Gamma$. Then $\phi$ fixes $\Gamma_i$ setwise for every $0 \le i < \alpha$.
\end{lemma}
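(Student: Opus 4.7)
The plan is to combine the structural description of the convex subgroups of $\Gamma$ given earlier in this section with the rigidity of well-orders. I will not try to control $\phi$ element-by-element; I will only track what it does to the chain $\mathcal{CS}(\Gamma)$.

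First, I would record the elementary (and general) fact that any automorphism $\phi$ of an ordered abelian group sends convex subgroups to convex subgroups. Indeed, if $\Delta$ is convex and $0 < g < h \in \phi(\Delta)$, then applying the order-preserving inverse $\phi^{-1}$ gives $0 < \phi^{-1}(g) < \phi^{-1}(h) \in \Delta$, and convexity of $\Delta$ forces $\phi^{-1}(g) \in \Delta$, whence $g \in \phi(\Delta)$. So $\phi$ induces a self-bijection of $\mathcal{CS}(\Gamma)$ which preserves inclusion.

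Next, I would invoke Proposition \ref{convexsub}, which, by the remark following Corollary \ref{corconvexsub}, applies verbatim to $\Gamma$: the assignment $i \mapsto \Gamma_i$ is an order-reversing bijection between $\alpha \cup \{\infty\}$ and $\mathcal{CS}(\Gamma)$. In particular $(\mathcal{CS}(\Gamma), \supseteq)$ is order-isomorphic to the ordinal $\alpha+1$, hence well-ordered.

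To finish, I would appeal to the standard fact that the only order-preserving self-bijection of a well-ordered set is the identity. Concretely, a transfinite induction on $i < \alpha$: assuming $\phi(\Gamma_j) = \Gamma_j$ for every $j < i$ and using that the induced map preserves $\supseteq$, the image $\phi(\Gamma_i)$ must sit $\supseteq$-below each of the already-fixed $\Gamma_j$'s, and running the same argument on $\phi^{-1}$ pins it down to $\Gamma_i$. I do not see a real obstacle here; all three ingredients (preservation of convex subgroups, the classification of $\mathcal{CS}(\Gamma)$, and well-order rigidity) are either routine or already established in the paper. The only mild point of care is in step one, where one uses both halves of ``automorphism of ordered abelian group'' — the group part to keep $\phi(\Delta)$ a subgroup and the order part to keep it convex.
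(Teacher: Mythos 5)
Your proof is correct and follows essentially the same route as the paper: both arguments observe that $\phi$ induces an inclusion-preserving self-bijection of $\mathcal{CS}(\Gamma)$, identify that chain with $\alpha+1$ via the classification of convex subgroups, and conclude by the rigidity of well-orders (the paper carries out the transfinite induction explicitly, using surjectivity of the induced map where you invoke $\phi^{-1}$, but this is the same argument).
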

\begin{proof}
Note that, for every $0 \le i < \alpha$, $\phi(\Gamma_i)$ is a convex subgroup of $\Gamma$. It follows that $\phi$ induces the order-preserving bijection $\Gamma_i \in \mathcal{CS}(\Gamma) \mapsto \phi(\Gamma_i) \in \mathcal{CS}(\Gamma)$, which will be denoted again by $\phi$. We prove that the automorphism induced by $\phi$ on $\mathcal{CS}(\Gamma)$ is the identity. Clearly, $\phi(\Gamma_0)=\Gamma_0$. Let $0<i<\alpha$ and suppose, by induction, $\phi(\Gamma_j)=\Gamma_j$ for every $j<i$. Therefore, $\phi(\Gamma_i) \subseteq \Gamma_i$. Let $\Delta \in \mathcal{CS}(\Gamma)$ be such that $\phi(\Delta)=\Gamma_i$. Then, $\Delta=\Gamma_h$ for some $h \ge i$. It follows that $\Gamma_h \subseteq \Gamma_i$ and, since $\phi$ is order-preserving, $\Gamma_i \subseteq \phi(\Gamma_i)$. Therefore, $\phi(\Gamma_i)=\Gamma_i$, and the statement is proved.
\end{proof}
From Lemma \ref{fixingGammai}, it follows immediately that
\begin{cor}
\label{corfixingGammai}
If $\phi$ is an automorphism of $\Gamma$, then $\phi$ fixes $\Set{f \in \Gamma \mid v(f)=i}$ setwise for every $0 \le i < \alpha$. 
\end{cor}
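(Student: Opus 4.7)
The plan is to derive the corollary directly from Lemma \ref{fixingGammai} by expressing the level set $\{f \in \Gamma \mid v(f) = i\}$ as a set-theoretic difference of two convex subgroups that the lemma already tells us are fixed setwise.

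Concretely, I would first observe that for any $i$ with $0 \le i < \alpha$, an element $f \in \Gamma$ satisfies $v(f) = i$ if and only if $v(f) \ge i$ and $v(f) \not\ge i+1$, so
\begin{equation*}
\{f \in \Gamma \mid v(f) = i\} = \Gamma_i \setminus \Gamma_{i+1},
\end{equation*}
where in the borderline case that $\alpha$ is a successor ordinal and $i+1 = \alpha$ we simply interpret $\Gamma_{i+1}$ as $\{0\}$ (the unique convex subgroup coming from $v(f) = \infty$), which $\phi$ trivially fixes setwise because $\phi(0) = 0$.

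Next, I would apply Lemma \ref{fixingGammai} to both $\Gamma_i$ and $\Gamma_{i+1}$: since $\phi$ fixes each of them setwise, it fixes their set-theoretic difference setwise as well. Since $\phi$ is a bijection of $\Gamma$ to itself, $\phi(\Gamma_i \setminus \Gamma_{i+1}) = \phi(\Gamma_i) \setminus \phi(\Gamma_{i+1}) = \Gamma_i \setminus \Gamma_{i+1}$, which gives the claim.

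There is no real obstacle here; the corollary is essentially immediate from the lemma, and the only mild point to watch is the indexing boundary when $\alpha$ happens to be a successor ordinal, which is handled by the observation that $\phi$ automatically fixes $\{0\}$.
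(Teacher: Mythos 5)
Your proof is correct and follows essentially the same route as the paper: writing $\{f \in \Gamma \mid v(f)=i\}$ as $\Gamma_i \setminus \Gamma_{i+1}$ and applying Lemma \ref{fixingGammai} together with injectivity of $\phi$. The only (harmless) difference is at the top index when $\alpha=\beta+1$: you note that the relevant ``$\Gamma_{i+1}$'' is $\{0\}$, which $\phi$ trivially fixes, whereas the paper instead observes that every element of $\Gamma_\beta$ is $0$-definable and hence fixed pointwise; both dispose of that case equally well.
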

\begin{proof}
Let $i<\alpha$ be such that $i+1<\alpha$. Then $\Set{f \in \Gamma \mid v(f)=i}=\Gamma_i \setminus \Gamma_{i+1}$ and, by Lemma \ref{fixingGammai}, it is fixed setwise. Furthermore, if $\alpha=\beta+1$ for some $\beta<\alpha$, each element of $\Gamma_{\beta}$ is $0$-definable and  $\Gamma_{\beta}$ is fixed pointwise.
\end{proof}

\begin{theor}
\label{carattautoinfinite}
Let $\phi$ be an automorphism of $\Gamma$. Then there exists $M=(m_{ik})_{i,k <\alpha}$ with $m_{ik}$'s as in (\ref{triangmatrixinfinite}) such that $\phi(f)=fM$ for every $f \in \Gamma$.
\end{theor}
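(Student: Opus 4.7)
The overall architecture of $M$ relative to $\phi$ is already in place from the discussion preceding the statement: every $f \in \Gamma$ has finite support, so $f = \sum_{i} f(i)\, e_{\{i\}}$, and additivity of $\phi$ gives $(\phi(f))(k) = \sum_{i < \alpha} f(i) m_{ik}$ with $m_{ik} = (\phi(e_{\{i\}}))(k)$. Hence the task reduces entirely to verifying the two conditions in (\ref{triangmatrixinfinite}), namely $m_{ik} = 0$ whenever $i > k$, and $m_{ii} = 1$ for every $i < \alpha$.

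The upper-triangularity falls out essentially for free from Corollary \ref{corfixingGammai}: for each $i < \alpha$ the set $\{f \in \Gamma \mid v(f) = i\}$ is $\phi$-invariant, so applying $\phi$ to $e_{\{i\}}$ yields $v(\phi(e_{\{i\}})) = i$, which is exactly the assertion $(\phi(e_{\{i\}}))(k) = 0$ for every $k < i$, i.e., $m_{ik} = 0$ whenever $i > k$. Since $\phi$ is order-preserving and $e_{\{i\}} > 0$, we also have $\phi(e_{\{i\}}) > 0$, and combined with $v(\phi(e_{\{i\}})) = i$ this forces the leading coordinate $m_{ii} = (\phi(e_{\{i\}}))(i)$ to be a strictly positive integer.

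The real content of the proof lies in pinning $m_{ii}$ down to $1$, and this is the main obstacle I would address by bringing in $\phi^{-1}$. Setting $n_{ik} = (\phi^{-1}(e_{\{i\}}))(k)$, the argument of the previous paragraph applied to $\phi^{-1}$ produces an analogous upper-triangular matrix $N = (n_{ik})$ with strictly positive integer diagonal. Expanding the identity
\begin{equation*}
e_{\{i\}} = \phi(\phi^{-1}(e_{\{i\}})) = \sum_{k \ge i} n_{ik}\, \phi(e_{\{k\}})
\end{equation*}
and reading off the $i$-th coordinate, each term with $k > i$ contributes $0$ because $v(\phi(e_{\{k\}})) = k > i$, leaving the relation $1 = n_{ii} \cdot m_{ii}$ between positive integers; consequently $n_{ii} = m_{ii} = 1$. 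An alternative would be to pass to the rank-one quotient $\Gamma_i / \Gamma_{i+1} \cong (\Z,+,<)$, whose only order-preserving automorphism is the identity, but this would require separate attention to a potential maximal index via the last sentence of Corollary \ref{corfixingGammai}; the $\phi^{-1}$-route treats all indices uniformly.
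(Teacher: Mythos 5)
Your proof is correct and follows essentially the same route as the paper: both arguments obtain the triangular shape and the strictly positive diagonal from the invariance of the convex subgroups $\Gamma_i$ (Lemma \ref{fixingGammai} and Corollary \ref{corfixingGammai}), and both pin down $m_{ii}=1$ by expressing $e_{\{i\}}$ as a $\Z$-combination of the $\phi(e_{\{k\}})$ with $k \ge i$ and reading off the $i$-th coordinate. The paper phrases this last step as ``$\Gamma_i$ is generated by $\{\phi(e_{\{j\}})\}_{j\ge i}$, hence $g(i)\equiv 0 \pmod{m_{ii}}$ for every $g\in\Gamma_i$,'' which is your $\phi^{-1}$-computation (with $g=\phi^{-1}(e_{\{i\}})$) in slightly different words.
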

\begin{proof}
Fix $i< \alpha$ and consider $e_{\{i\}} \in \Gamma$. Set $m_{ik}=(\phi(e_{\{i\}}))(k)$ for every $k<\alpha$. Then by Corollary \ref{corfixingGammai}, $m_{ik}=0$ for $k<i$ and $m_{ik} \ge 1$ for $k=i$. Note that Lemma \ref{fixingGammai} implies that each $\Gamma_i=\Set{f \in \Gamma \mid v(f) \ge i}$ is generated  by $\{\phi(e_{\{j\}})\}_{j\ge i}$ as an abelian group. Therefore, for every $g \in \Gamma_i$ with $v(g)=i$ we have $g(i)=km_{ii}$ for some $k \in \Z$. Therefore $g(i) \equiv 0$ $(\mod m_{ii})$ for every $g \in \Gamma_i$. Then $m_{ii}=1$, and so the statement is proved. 
\end{proof}

Summarizing, we have shown that, for any ordinal $\alpha$, $\alpha >1$, every automorphism of $\Gamma=\sum_{i< \alpha}\Z$ can be represented as a matrix $M=(m_{ik})_{i,k <\alpha}$ with $m_{ik}$'s as in (\ref{triangmatrixinfinite}).

\section{Failure of elimination of imaginaries}

We now prove that in both cases $\Gamma=\sum_{i<\alpha}\Z$ and $\Lambda=\mathscr{H}_{i<\alpha}\Z$, there exist some imaginaries of the ordered abelian group that can not be eliminated. We will first focus on the case of the lexicographic sum.

\begin{theor}
\label{NOTEI}
Let $\alpha>1$ be an ordinal. Then $\Gamma=\sum_{i< \alpha} \Z$ does not admit elimination of imaginaries in the pure language of ordered abelian groups.
\end{theor}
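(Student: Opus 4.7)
The plan is to apply Remark \ref{prEI} contrapositively: I will exhibit a $0$-definable equivalence relation $E$ on $\Gamma$ and one of its classes $X$ such that the setwise stabilizer $\text{Stab}_\Gamma(X)$ is not equal to the pointwise stabilizer $\text{Aut}(\Gamma/\bar{b})$ of any tuple $\bar{b}\subseteq\Gamma$. The description of $\text{Aut}(\Gamma)$ given by Theorems \ref{carattautofinite} and \ref{carattautoinfinite}, as upper triangular integer matrices with $1$'s on the diagonal acting on the right, reduces this to a linear-algebra computation inside that matrix group.

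Concretely, I take $E(x,y)$ to be the $0$-definable relation $x\equiv y\pmod{2\Gamma}$, i.e.\ $\exists z\,(x-y=2z)$, and set $X=e_{\{0\}}+2\Gamma$. For any $\phi\in\text{Aut}(\Gamma)$ with matrix $M=(m_{ik})$, one has $\phi(X)=\phi(e_{\{0\}})+2\Gamma$, so $\phi\in\text{Stab}_\Gamma(X)$ if and only if $\phi(e_{\{0\}})-e_{\{0\}}=\sum_{k>0}m_{0k}e_{\{k\}}$ lies in $2\Gamma$, equivalently $m_{0k}\in 2\Z$ for every $k>0$. Call this subgroup $G$. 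Since $\alpha>1$ the index $k=1$ is available, so the automorphism with $m_{01}=1$ and all other off-diagonal entries equal to $0$ witnesses $G\subsetneq\text{Aut}(\Gamma)$.

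The heart of the argument is determining $\text{Fix}(G)$. From $\phi(f)=fM=\sum_{j}\bigl(\sum_{k\le j}f(k)m_{kj}\bigr)e_{\{j\}}$, the equation $\phi(f)=f$ becomes $\sum_{k<j}f(k)m_{kj}=0$ for every $j<\alpha$. Ranging $\phi$ over $G$: taking $j=1$ and varying $m_{01}$ over $2\Z$ forces $f(0)=0$; for each $j\ge 2$ and each $1\le k<j$, varying $m_{kj}$ over $\Z$ forces $f(k)=0$. Thus $f(k)=0$ whenever some $j<\alpha$ with $j>k$ exists. Consequently $\text{Fix}(G)=\{0\}$ when $\alpha$ is a limit ordinal, and $\text{Fix}(G)=\Z\cdot e_{\{\beta\}}$ when $\alpha=\beta+1$. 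In the successor case, because no $j>\beta$ lies in $\alpha$, every matrix in $\text{Aut}(\Gamma)$ fixes $e_{\{\beta\}}$; so in both cases $\text{Fix}(G)$ is pointwise fixed by the whole of $\text{Aut}(\Gamma)$.

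To conclude, suppose for contradiction that $\Gamma$ eliminated imaginaries. By Remark \ref{prEI} applied to $X$, there would be a tuple $\bar{b}\subseteq\Gamma$ with $G=\text{Stab}_\Gamma(X)=\text{Aut}(\Gamma/\bar{b})$. Since every element of $G$ fixes $\bar{b}$ pointwise, $\bar{b}\subseteq\text{Fix}(G)$; by the previous paragraph this yields $\text{Aut}(\Gamma/\bar{b})=\text{Aut}(\Gamma)\supsetneq G$, contradicting the equality $G=\text{Aut}(\Gamma/\bar{b})$. The only real obstacle is the computation of $\text{Fix}(G)$ above; once that is in place, everything else is formal manipulation based on the matrix description of $\text{Aut}(\Gamma)$.
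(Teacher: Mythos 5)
Your proposal is correct and follows essentially the same strategy as the paper: take the class $X$ of a generator modulo $p\Gamma$ (you specialize to $p=2$ and $a=e_{\{0\}}$), use the upper-triangular matrix description of $\mathrm{Aut}(\Gamma)$ and a family of elementary matrices in $\mathrm{Stab}_\Gamma(X)$ to force any would-be canonical parameter into $\mathrm{Fix}(\mathrm{Aut}(\Gamma))$, and then contradict this with the automorphism having $m_{01}=1$. This is exactly the scheme the paper itself summarizes after Theorem \ref{NOTEIHAHN}, so no further comparison is needed.
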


\begin{proof}
Suppose by contradiction that $\Gamma$ admits elimination of imaginaries. 

Let $p$ be a prime and fix an element $a \in \Gamma$ such that $a(0) \notin p\Z$. Let $X=[a]_{\equiv_p}$ be the $\equiv_p$ - equivalence class of $a$. Since $\equiv_p$ is $0$-definable, there exists a canonical parameter $\bar{b}$ for $X$, let $\bar{b}=(b_j)_{j< \mu}$ for some positive integer $\mu$.

Let $\phi$ be an automorphism of $\Gamma$ and $M$ the matrix $(m_{ik})_{i,k < \alpha}$ such that $\phi(f)=fM$ for every $f \in \Gamma$. Recall that $M$ is either finite or infinite depending on the cardinality of $\alpha$, and that, for every $i,k< \alpha$, $m_{ii}=1$ and $m_{ik}=0$ for $k<i$. Suppose $\phi \in \text{Stab}_{\Gamma}(X)$. Then $\phi(a) \equiv_p a$, namely, $(\phi(a))(i) - a(i) \in p\Z$ for every $i < \alpha$. Therefore, from $(\phi(a))(1)=a(0)m_{01}+a(1)$ and $a(0) \notin p\Z$, it follows that $m_{01} \in p\Z$. Moreover, since $\phi \in \text{Stab}_{\Gamma}(X)$ then, by Fact \ref{prEI}, $\phi \in \text{Aut}(\Gamma / \bar{b})$, namely, $\phi(b_j)=b_j$ for every $j<\mu$. In particular, since for every $0<k <\alpha$, $(\phi(b_j))(k)=b_j(k)+\sum_{0 \le i < k}(b_j(i))m_{ik}$ for every $j < \mu$, it follows that 
\begin{equation}
\label{fixingb}
\sum_{0 \le i <k}(b_j(i))m_{ik}=0 \text{ for every } 0<k<\alpha \text{ and for every } j < \mu.
\end{equation}

Let $h<\alpha$ be such that $h+1 < \alpha$, and consider the following matrix $\tilde{M}^h=(\tilde{m}_{ik}^h)_{i,k < \alpha}$ where
\begin{equation*}
\tilde{m}_{ik}^h=
\begin{cases}
1 & \text{ if } i=k\\
p & \text{if } i=h,k=h+1 \\
0 & \text{otherwise}
\end{cases}
\end{equation*}
In particular, $\tilde{M}^h$ is an upper triangular matrix, with all entries of the diagonal equal to $1$. Since for every $f \in \Gamma$, $(f\tilde{M}^h)(h+1)=p(f(h))+f(h+1)$ and $(f\tilde{M}^h)(k)=f(k)$ for every $k<\alpha$, $k \ne h$, $f\tilde{M}^h \in \Gamma$ and $\tilde{M}^h$ induces the function
\begin{equation*}
\phi_{\tilde{M}} \colon f \in \Gamma \mapsto f\tilde{M}^h \in \Gamma.
\end{equation*}
In particular, $\phi_{\tilde{M}^h}(f) \equiv_p f$ for every $f \in \Gamma$. Moreover, $\phi_{\tilde{M}^h}$ is a group automorphism and is order-preserving, hence $\phi_{\tilde{M}^h} \in \text{Stab}_{\Gamma}(X)$. From (\ref{fixingb}) it follows that $p(b_j(h))=0$ for every $j<\mu$, and, so, $b_j(h)=0$ for every $j<\mu$. Now we need to distinguish two cases, $\alpha$ limit ordinal and $\alpha$ successor ordinal.
\begin{description}
\item[case $\alpha$ limit] Since for every $h<\alpha$, $h+1<\alpha$, we obtain $b_j=0$ for every $j<\mu$.
\item[case $\alpha=\beta+1$ for some $\beta<\alpha$] Then $b_j(h)=0$ for every $h<\beta$ and for every $j<\mu$. In particular, for every $j<\mu$, $b_j \in \Gamma_{\beta}$, i.e. $b_j=k_je_{\{\beta\}}$ for some $k_j \in \Z$. Since $e_{\{\beta\}}$ is the minimal positive element of $\Gamma$, $b_j$ is $0$-definable for every $j<\mu$.
\end{description}
Therefore, in both cases, all the automorphisms of $\Gamma$ fix $\bar{b}$. Hence, by Fact \ref{prEI}, any automorphism of $\Gamma$ fixes $X$. This is clearly false. Indeed, for example, the automorphism $\psi \colon \Gamma \to \Gamma$ defined by $\psi(f)=f\bar{M}$, for every $f \in \Gamma$, and $\bar{M}=(\bar{m}_{ik})_{i,k < \alpha}$ with
\begin{equation*}
\bar{m}_{ik}=
\begin{cases}
1 & \text{ if either } i=k \text{ or } i=0,k=1 \\
0 & \text{otherwise}
\end{cases} 
\end{equation*}
does not fix $X$ since $\bar{m}_{01} \notin p\Z$. The contradiction follows from the existence of a canonical parameter for $X$, and so $\Gamma$ does not admit elimination of imaginaries.
\end{proof}

Using the same argument as in Theorem \ref{NOTEI}, we can prove the failure of elimination of imaginaries in $L_{oag}$ also for $\Lambda=\mathscr{H}_{i<\alpha}\Z$. Indeed, let $p$ be a prime, $a \in \Lambda$ be such that $a(0) \notin p\Z$ and $X=[a]_{\equiv_p}$. Fix $h < \alpha$, then the automorphism $\phi_h \colon \Lambda \to \Lambda$ defined by
\begin{equation}
\label{phih}
(\phi_h(f))(j)=
\begin{cases}
pf(j-1)+f(j) & \text{ if } j=h+1 \\
f(j) & \text{otherwise}
\end{cases}
\end{equation}
for every $f \in \Lambda$, fixes $X$ setwise. Therefore, if $\bar{b}$ is a canonical parameter for $X$, we obtain that $\bar{b}$ is a tuple of $ke_{\{\beta\}}$'s, with $k \in \Z$, if $\alpha=\beta+1$ for some $\beta<\alpha$, and $\bar{b}=\bar{0}$ otherwise. In both cases  we have a contradiction. Hence, we have proved
\begin{theor}
\label{NOTEIHAHN}
Let $\alpha>1$ be an ordinal. Then $\Lambda=\mathscr{H}_{i< \alpha} \Z$ does not admit elimination of imaginaries in the pure language of ordered abelian groups.
\end{theor}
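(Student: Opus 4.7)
The plan is to adapt the proof of Theorem~\ref{NOTEI} essentially verbatim, substituting the explicitly defined automorphisms $\phi_h$ of equation~(\ref{phih}) for the matrix-induced automorphisms used in the lexicographic-sum case. Assume for contradiction that $\Lambda$ eliminates imaginaries, fix a prime $p$ and an element $a \in \Lambda$ with $a(0) \notin p\Z$, and let $X = [a]_{\equiv_p}$. By Definition~\ref{defEI} there exists a canonical parameter $\bar{b} = (b_j)_{j<\mu}$ for $X$.

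The first task is to verify that each $\phi_h$, for $h$ with $h+1 < \alpha$, really is an automorphism of the ordered group $\Lambda$. Since $\phi_h$ is $\Z$-linear with inverse obtained by subtracting $p$ times the $h$-th coordinate from the $(h{+}1)$-st, and since $\text{supp}(\phi_h(f)) \subseteq \text{supp}(f) \cup \{h+1\}$ remains well-ordered, $\phi_h$ is a group automorphism of $\Lambda$. A short case analysis on $v(f)$ shows $v(\phi_h(f)) = v(f)$ and $\phi_h(f)(v(f)) = f(v(f))$, so $\phi_h$ is order-preserving. Moreover, $\phi_h(f) - f$ has every coordinate divisible by $p$, hence $\phi_h \in \text{Stab}_\Lambda(X)$.

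Next I would invoke Remark~\ref{prEI} to deduce that every such $\phi_h$ fixes $\bar{b}$ pointwise. Reading $\phi_h(b_j) = b_j$ at the $(h{+}1)$-st coordinate yields $p\,b_j(h) = 0$, hence $b_j(h) = 0$ for every $j < \mu$ and every $h$ with $h+1 < \alpha$. Splitting into the two cases exactly as in Theorem~\ref{NOTEI}: if $\alpha$ is a limit ordinal, every $h < \alpha$ satisfies $h+1 < \alpha$, so $\bar{b} = \bar{0}$; if $\alpha = \beta + 1$, then each $b_j$ has support contained in $\{\beta\}$, so $b_j = k_j\, e_{\{\beta\}}$ for some $k_j \in \Z$, and $e_{\{\beta\}}$ is $0$-definable as the minimal positive element of $\Lambda$ by Proposition~\ref{order}. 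In either case, $\bar{b}$ is fixed by every automorphism of $\Lambda$, so by Remark~\ref{prEI} every automorphism stabilizes $X$.

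To derive the contradiction, I exhibit an automorphism that does not stabilize $X$: the ``shear'' $\psi$ with $\psi(f)(1) = f(0) + f(1)$ and $\psi(f)(k) = f(k)$ for $k \neq 1$, which is an order automorphism of $\Lambda$ by the same verification used for $\phi_h$. Since $\psi(a)(1) - a(1) = a(0) \notin p\Z$, we have $\psi(a) \not\equiv_p a$, contradicting the previous paragraph. The only genuinely non-formal step is the order-preservation check for $\phi_h$ and $\psi$, which is purely computational once one notes that perturbing a single coordinate cannot alter $v(f)$ or $f(v(f))$; the global structure of the argument is identical to the lexicographic-sum case.
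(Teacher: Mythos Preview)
Your proposal is correct and follows precisely the approach sketched in the paper: you use the same shear automorphisms $\phi_h$ of~(\ref{phih}), the same canonical-parameter argument via Remark~\ref{prEI}, the same case split on whether $\alpha$ is a limit or successor ordinal, and the same witness $\psi$ (the $\bar{M}$-automorphism from the proof of Theorem~\ref{NOTEI}) for the contradiction. Your write-up simply fills in the routine verifications that the paper leaves implicit.
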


Summarizing, the argument used in \ref{NOTEI} and \ref{NOTEIHAHN} consists in determining, for a fixed $E$-equivalence class $X$ of $\Gamma$, where $E$ is a $0$-definable equivalence relation,
\begin{enumerate}
	\item a set $S \subseteq \text{Stab}_{\Gamma}(X)$ with $\text{Fix}(S) \subseteq \text{Fix(Aut}(\Gamma))$
	\item an automorphism $\phi \in \text{Aut}(\Gamma) \setminus \text{Stab}(X)$.
\end{enumerate}
This argument can be adapted for proving the failure of elimination of imaginaries for other ordered abelian groups, such as $\mathscr{H}_{i< \alpha} \Z \times \Q$ and $\sum_{i < \alpha} \Z \times \Q$, with $\alpha>1$ ordinal. Indeed, in a similar way we can prove
\begin{theor}
\label{NOTEIHAHNQ}
Let $\alpha>1$ be an ordinal. Then $\mathscr{H}_{i< \alpha} \Z \times \Q$ does not admit elimination of imaginaries in the pure language of ordered abelian groups.
\end{theor}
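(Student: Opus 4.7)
The plan is to adapt the template of Theorems \ref{NOTEI} and \ref{NOTEIHAHN}. I would assume for contradiction that $\Lambda':=\mathscr{H}_{i<\alpha}\Z\times\Q$ eliminates imaginaries, fix a prime $p$ and an element $a=(a',q)\in\Lambda'$ with $a'(0)\notin p\Z$, set $X=[a]_{\equiv_p}$, and extract a canonical parameter $\bar{b}=((f_j,q_j))_{j<\mu}$. The key structural remark is that $\{0\}\times\Q\subseteq p\Lambda'$ because $\Q=p\Q$, so any modification of the $\Q$-coordinate is invisible modulo $p$ and membership in $X$ depends only on the $\mathscr{H}_{i<\alpha}\Z$-coordinate.

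I would first extend every automorphism $\phi_h$ from (\ref{phih}) to an automorphism of $\Lambda'$ acting trivially on the $\Q$-coordinate. Each such extension remains an order-preserving group automorphism (the $\mathscr{H}_{i<\alpha}\Z$-factor dominates the lexicographic order) and lies in $\text{Stab}_{\Lambda'}(X)$, because its effect on the $\mathscr{H}_{i<\alpha}\Z$-coordinate is already a multiple of $p$. The computation from the proof of Theorem \ref{NOTEIHAHN} then carries over verbatim to the $\mathscr{H}_{i<\alpha}\Z$-component of each $b_j$, forcing $f_j=0$ for every $j<\mu$ when $\alpha$ is a limit ordinal, and forcing $f_j=k_je_{\{\beta\}}$ for some $k_j\in\Z$ when $\alpha=\beta+1$.

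To dispose of the successor case, I would introduce the new automorphism $\psi\colon(f,q)\mapsto(f,q+f(\beta))$, whose inverse is $(f,q)\mapsto(f,q-f(\beta))$. It is order-preserving because it only affects the infinitesimal $\Q$-coordinate, and it lies in $\text{Stab}_{\Lambda'}(X)$ since $\psi(a)-a=(0,a'(\beta))\in p\Lambda'$. Imposing $\psi(b_j)=b_j$ then yields $k_j=0$, so in both cases $b_j=(0,q_j)$. Finally, the bad automorphism $\bar{\phi}\colon(f,q)\mapsto(f+f(0)e_{\{1\}},q)$, well-defined because $\alpha>1$, fixes every $(0,q_j)$ and yet satisfies $\bar{\phi}(a)-a=(a'(0)e_{\{1\}},0)\notin p\Lambda'$, contradicting the canonical parameter property of $\bar{b}$. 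The single new ingredient compared to Theorem \ref{NOTEIHAHN} is the divisibility of $\Q$, which is what allows $\psi$ to land in the setwise stabilizer of $X$ (and simultaneously kills the obstruction that $e_{\{\beta\}}$ is no longer the minimal positive element of $\Lambda'$); this is the step that deserves the most careful verification.
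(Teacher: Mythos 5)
Your proposal is correct and follows essentially the same route as the paper: shearing automorphisms in $\text{Stab}(X)$ force the canonical parameter into $\{0\}^{\alpha}\times\Q$ (your $\psi\colon(f,q)\mapsto(f,q+f(\beta))$ handling the successor case plays the role of the paper's $\psi_{\beta}$, which adds $pf(\beta)$ to the $\Q$-coordinate), and a final automorphism fixing those parameters but moving $X$ yields the contradiction. The only cosmetic difference is that the paper additionally kills the $\Q$-coordinate of $\bar{b}$ via the automorphism doubling the last component, concluding $\bar{b}=0$, whereas you observe this step is unnecessary because your final automorphism $\bar{\phi}$ already fixes $\{0\}^{\alpha}\times\Q$ pointwise.
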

\begin{proof}
Let $\Omega=\mathscr{H}_{i< \alpha} \Z \times \Q$. Then $\Omega$ is the lexicographic product $\mathscr{H}_{i<\alpha+1}G_i$, where $G_i=\Z$ for every $i<\alpha$ and $G_{\alpha}=\Q$. As in the proof of Theorem \ref{NOTEI}, let $p$ be a prime, $a \in \Omega$ such that $a(0) \notin pG_0=p\Z$ and $X=[a]_{\equiv_p}$. Let $\bar{b}$ be a canonical parameter for $X$, $\bar{b}=(b_j)_{j< \mu}$ for some positive integer $\mu$.

Fix $h< \alpha$, and consider the automorphism  $\psi_h \colon \Omega \to \Omega$ defined by
\begin{equation*}
(\psi_h(f))(j)=
\begin{cases}
pf(j-1)+f(j) & \text{ if } j=h+1 \\
f(j) & \text{otherwise}
\end{cases}
\end{equation*}
for every $f \in \Omega$. Since $\psi_h(f) \equiv_p f$ for every $f \in \Omega$, in particular, $\psi_h \in \text{Stab}_{\Omega}(X)$. Then, by Fact \ref{prEI}, $\psi_h \in \text{Aut}(\Gamma / \bar{b})$, i.e. $\psi_h(b_j)=b_j$ for every $j<\mu$. Therefore, $p(b_j(h))=0$ for every $j<\mu$, and, so, $b_j(h)=0$ for every $j<\mu$. From the generality of $h<\alpha$, it follows that $b_j(h)=0$ for every $h<\alpha$ and every $j<\mu$, namely, $b_j \in \{0\}^{\alpha} \times \Q$ for every $j<\mu$.

Now consider the function $\phi \colon \Omega \to \Omega$ defined by
\begin{equation*}
(\phi(f))(j)=
\begin{cases}
2f(j) & \text{ if } j=\alpha \\
f(j) & \text{otherwise}
\end{cases}
\end{equation*}
for every $f \in \Omega$. Recall that $G_{\alpha}=\Q$. Then, $\phi$ is an order-preserving group automorphism and, trivially, $\phi \in \text{Stab}_{\Omega}(X)$. Therefore, by Fact \ref{prEI}, $\phi(b_j)=b_j$ for every $j<\mu$, and, so, $b_j(\alpha)=0$ for every $j<\mu$. It follows that $b_j=0$ for every $j<\mu$ and $X$ is $0$-definable. Hence, $\chi(X)=X$ for all automorphisms $\chi$ of $\Omega$. This gives a contradiction, since the automorphism $\chi \colon \Omega \to \Omega$ defined by
\begin{equation*}
(\chi(f))(j)=
\begin{cases}
f(0)+f(1) & \text{ if } j=1 \\
f(j) & \text{otherwise}
\end{cases} 
\end{equation*}
for every $f \in \Omega$, does not fix $X$.
\end{proof}
Note that the proof of Theorem \ref{NOTEIHAHNQ} also works for $\sum_{i < \alpha} \Z \times \Q$.

\begin{remark}
The above arguments can be used in order to prove the failure of elimination of imaginaries for many other ordered abelian groups, not only for lexicographic products. For example, let $p$ be a prime, $p \ne 2$, and consider $\Z_{(p)}$ as an ordered abelian group with the order induced from the usual order on $\Q$. Let $X=[1]_{\equiv_p}=1+p\Z_{(p)}$ be the $\equiv_p$-equivalence class of $1$. Define $\phi \colon \Z_{(p)} \to \Z_{(p)}$ by $\phi(w)=(p+1)w$. Then, $\phi$ is bijective and order-preserving, since $p+1>0$. Moreover, $\phi \in \text{Stab}(X)$, and $\text{Fix}(\phi)=\{0\}$. Therefore, if $\Z_{(p)}$ eliminates imaginaries (in $L_{oag}$), $X$ is $0$-definable and, in particular, $\text{Stab}(X)=\text{Aut}(\Z_{(p)})$. This is clearly false, since the automorphism $\psi \colon \Z_{(p)} \to \Z_{(p)}$ defined by $\psi(w)=2w$ does not fix $X$.
\end{remark}

\subsection{Weak elimination of imaginaries}
From Fact \ref{factpoizat}, one can deduce that the behaviour of the theories of $\mathscr{H}_{i< \alpha} \Z$ and $\mathscr{H}_{i< \alpha} \Z \times \Q$ in terms of "coding" imaginaries is even worse. Indeed, both $Th(\mathscr{H}_{i< \alpha}\Z)$ and $Th(\mathscr{H}_{i< \alpha} \Z \times \Q)$  do not have even weak elimination of imaginaries. Using a similar argument to that used in the proof of Theorem \ref{NOTEI}, we now provide a direct proof of the failure of weak elimination of imaginaries for the theory of $\mathscr{H}_{i< \alpha} \Z$.

\begin{theor}
Let $\alpha$ be an ordinal, $\alpha >1$. Then $T=Th(\mathscr{H}_{i<\alpha}\Z)$ does not have weak elimination of imaginaries in the pure language of ordered abelian groups. 
\end{theor}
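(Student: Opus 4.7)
The plan is to adapt the argument of Theorem \ref{NOTEI}, noting that weak elimination of imaginaries would provide a \emph{finite} set $B$ of parameter tuples coding the class (instead of a unique one), and then use a simple pigeonhole on the permutation of $B$ by the automorphisms $\phi_h$ from (\ref{phih}) to reduce to essentially the same situation as in the strong case.

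First I would fix a prime $p$, an element $a \in \Lambda = \mathscr{H}_{i<\alpha}\Z$ with $a(0) \notin p\Z$, and set $X = [a]_{\equiv_p}$. Suppose, for contradiction, that $T$ weakly eliminates imaginaries. Then there exist an $L_{oag}$-formula $\theta(\bar{x},\bar{z})$ and a finite set $B = \{\bar{b}_1, \dots, \bar{b}_k\}$ of tuples of $\Lambda$ such that $\theta(\Lambda^m,\bar{b}) = X$ iff $\bar{b} \in B$. Any $\phi \in \text{Stab}_\Lambda(X)$ then satisfies $\phi(\theta(\Lambda^m,\bar{b})) = \theta(\Lambda^m,\phi(\bar{b})) = X$ for $\bar{b} \in B$, so $\phi$ permutes the $k$-element set $B$.

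Next I would exploit the automorphisms $\phi_h$ of (\ref{phih}), for $h<\alpha$ with $h+1<\alpha$. Since $\phi_h(f) \equiv_p f$ for every $f \in \Lambda$, one has $\phi_h \in \text{Stab}_\Lambda(X)$, so $\phi_h$ permutes $B$; hence $\phi_h^{k!}$ fixes each tuple of $B$ pointwise. A direct computation shows that $\phi_h^{k!}$ is the identity on every coordinate except $h+1$, where it sends $f(h+1)$ to $k!\,p\,f(h) + f(h+1)$. From $\phi_h^{k!}(\bar{b}_l) = \bar{b}_l$ for all $l$, it follows that every component $b$ of every tuple in $B$ satisfies $b(h) = 0$. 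The key point here is that the exponent $k! = |B|!$ depends only on $B$, not on $h$, so the conclusion holds uniformly in $h$.

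Finally, letting $h$ range over all admissible values, I would split as in Theorem \ref{NOTEI}: if $\alpha$ is a limit ordinal, every coordinate of every $\bar{b}_l$ vanishes, whence $B = \{\bar{0}\}$; if $\alpha = \beta+1$, every component of every $\bar{b}_l$ lies in $\Lambda_\beta = \Z\,e_{\{\beta\}}$ and is therefore $0$-definable, since $e_{\{\beta\}}$ is the minimal positive element of $\Lambda$ by Proposition \ref{order}. In either case every automorphism of $\Lambda$ fixes $B$ pointwise, and so stabilizes $X$ setwise. This contradicts the existence of the automorphism $\bar\psi(f) = f\bar{M}$ used at the end of Theorem \ref{NOTEI}, whose matrix entry $\bar m_{01} = 1$ ensures $\bar\psi(a) \not\equiv_p a$. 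The only subtle step is the pigeonhole in the third paragraph; the rest of the argument is an almost verbatim rerun of the strong case.
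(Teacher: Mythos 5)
Your proof is correct, and it reuses the paper's main ingredients (the class $X=[a]_{\equiv_p}$ with $a(0)\notin p\Z$ and the automorphisms $\phi_h$ of (\ref{phih})), but the way you extract a contradiction from the finiteness of $B$ is genuinely different from the paper's. The paper picks a single code $\bar{b}\in B$ that is not $0$-definable, sets $h=v(b_0)$ for a suitable component $b_0$, and observes that the iterates $\phi_h^{\delta}(\bar{b})$, $\delta\in\mathbb{N}$, are pairwise distinct elements of $B$ (since the $(h+1)$-coordinate grows as $\delta p\, b_0(h)+b_0(h+1)$ with $b_0(h)\neq 0$), so $B$ cannot be finite; no further automorphism is needed. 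You instead note that each $\phi_h$ permutes the $k$-element set $B$, so $\phi_h^{k!}$ fixes every code, deduce $b(h)=0$ for every component and every admissible $h$, and then rerun the endgame of Theorem \ref{NOTEI}: all codes are $0$-definable (zero, or multiples of the minimal positive element $e_{\{\beta\}}$), hence $\mathrm{Stab}_\Lambda(X)=\mathrm{Aut}(\Lambda)$, contradicting the explicit automorphism with a $1$ in position $(0,1)$. Your route is slightly longer (it needs the permutation-order/pigeonhole step and the final non-stabilizing automorphism) but it has the merit of bypassing the paper's somewhat delicate preliminary claim that \emph{every} component $b_j$ of a code is non-$0$-definable: you simply prove all components vanish on the relevant coordinates, which is cleaner. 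Two cosmetic points: the relevant $m$ is $1$ here (the relation $\equiv_p$ is on $\Lambda^1$), and since you work in $\Lambda=\mathscr{H}_{i<\alpha}\Z$ rather than $\Gamma$, the final automorphism should be given coordinatewise, $(\chi(f))(1)=f(0)+f(1)$ and $(\chi(f))(j)=f(j)$ otherwise (the matrix description from Theorem \ref{NOTEI} is stated for $\Gamma$, though it extends to $\Lambda$ since each column has finitely many nonzero entries); also, as in the paper's own proof, one tacitly uses that the finite set $B$ of codes is nonempty.
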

\begin{proof}
Suppose for a contradiction that $T$ has weak elimination of imaginaries. For simplicity, consider $\Gamma=\sum_{i<\alpha}\Z$. As in the proof of Theorem \ref{NOTEI}, let $p$ be a prime and consider $X=[a]_{\equiv_p}$ the $\equiv_p$-equivalence class of $a \in \Gamma$ such that $a(0) \notin p\Z$. Then there exist a formula $\theta(x,\bar{w})$ and $B$ a finite set of $|\bar{w}|$-tuples such that $X=\theta(\Gamma,\bar{b})$ if and only if $\bar{b} \in B$. 

Let $\mu=|\bar{w}|$ and $\bar{b}=(b_0,\dots,b_{\mu}) \in B$. Then $\bar{b}$ is not $0$-definable, since $X$ is not $0$-definable. In particular, if $\alpha$ is a limit ordinal, then $b_j \ne 0$ for every $j \le \mu$, otherwise, if $\alpha=\beta+1$ for some $\beta < \alpha$, then $b_j \notin \Gamma_{\beta}$ for every $j \le \mu$. Fix $j \le \mu$ and consider $b_j$. Without loss of generality, we may assume $j=0$. Let $h=v(b_0)<\infty$. If $\alpha=\beta+1$, then $h<\beta$. In any case, since $h+1 < \alpha$, we can consider the function $\phi_{h} \colon \Gamma \to \Gamma$, defined as in (\ref{phih}). Therefore, since $\phi_h \in \text{Stab}_{\Gamma}(X)$, we have $X=\theta(\Gamma,\phi_h(\bar{b}))$ and $\phi_h(\bar{b}) \in B$. In particular, $\phi_h(\bar{b})=(\phi_h(b_0),\dots,\phi_h(b_{\mu}))$ and, by definition of $\phi_h$, we have $(\phi_h(b_0))(h+1)=p(b_0(h))+b_0(h+1)$. Then, from $b_0(h) \ne 0$, it follows that $(\phi_h(b_0))(h+1) \ne b_0(h+1)$ and $\phi_h(b_0) \ne b_0$. Therefore, $\phi_h(\bar{b}) \ne \bar{b}$. Now consider $\phi_h^{\delta}=\underbrace{\phi_h \circ \dots \circ \phi_h}_{\delta \text{ times}}$ for any natural number $\delta>0$. For every $\delta>0$, $\phi_h^{\delta} \in \text{Stab}_{\Gamma}(X)$ and, then, $\phi_h^{\delta}(\bar{b}) \in B$. Clearly, $(\phi_h^{\delta}(b_0))(h+1)=\delta p(b_0(h))+b_0(h+1)$. It follows that $\{\phi_h^{\delta}(\bar{b})\}_{\delta \in \mathbb{N}}$ is an infinite sequence of pairwise distinct elements of $\Gamma$. This gives a contradiction since $B$ is finite. 
\end{proof}

Similarly, we can provide a direct proof of the failure of  weak elimination of imaginaries for the theory of $\mathscr{H}_{i< \alpha} \Z \times \Q$.

\section{definable Skolem functions}
In this section, we prove that the theories of $\Lambda=\Z^n$ and $\Omega=\Z^n \times \Q$, with $n\ge1$, have definable Skolem functions once finitely many new constants are added to the language of ordered abelian groups. We recall that
\begin{definition}
Let $\mathfrak{M}$ be an $L$-structure. We say that $Th(\mathfrak{M})$ has definable Skolem functions if for every $L$-formula $\phi(\bar{x},y)$, there is an $L$-formula $\psi(\bar{x},y)$ such that
\begin{equation}
\label{Skolemfunc}
Th(\mathfrak{M}) \vdash \forall \bar{x} (\exists y \phi(\bar{x},y) \to (\exists ! y \psi(\bar{x},y) \land \forall y (\psi(\bar{x},y) \to \phi(\bar{x},y)))).
\end{equation} 
\end{definition}

First of all, note that the convex subgroup $C=\{0\}^n \times \Q$ of $\Z^n \times \Q$ is $0$-definable in $L_{oag}$ by the formula $\gamma(x)$ which says that all elements of $[0,\abs{x}]$ are divisible by $2$. We highlight
\begin{fact}
Let $G$ be an ordered abelian group. \\
(1) If $G \equiv \Z^n$, with $n \ge 1$, then $G$ has $n$ $0$-definable proper convex subgroups $G^{(0)}=\{0\} < G^{(1)}< \dots < G^{(n-1)} <G^{(n)}=G$ such that $G^{(i)} / G^{(i-1)} \equiv \Z$ for every $1 \le i \le n$. \\
(2) If $G \equiv \Z^n \times \Q$, with $n \ge 1$, then $G$ has $n+1$ $0$-definable proper convex subgroups $G^{(-1)}=\{0\} < G^{(0)}< \dots < G^{(n-1)} < G^{(n)}=G$ such that $G^{(0)} \equiv \Q$ and $G^{(i)} / G^{(i-1)} \equiv \Z$ for every $1 \le i \le n$.
\end{fact}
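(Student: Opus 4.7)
The plan is to handle both parts uniformly: first exhibit an explicit $0$-definable chain in the base models $\Z^n$ and $\Z^n\times\Q$ using Lemma \ref{defconvex} and the formula $\gamma$, and then transfer to any elementarily equivalent $G$ by noting that a $0$-definable convex subgroup produces a $0$-interpretable quotient whose theory is preserved under elementary equivalence.

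For (1), I would start with $G=\Z^n$. Lemma \ref{defconvex} already provides, for each $0\le i\le n-1$, a $0$-definable convex subgroup ${\Z^n}_i=\{z\in\Z^n:v(z)\ge i\}$. Setting $G^{(0)}=\{0\}$ and $G^{(j)}={\Z^n}_{n-j}$ for $1\le j\le n$ yields a strictly increasing chain of $0$-definable convex subgroups, and the successive quotients ${\Z^n}_{n-j}/{\Z^n}_{n-j+1}$ are isomorphic to $\Z$ via projection onto the $(n-j)$-th coordinate. For general $G\equiv\Z^n$, the same $L_{oag}$-formulas from Lemma \ref{defconvex} still define convex subgroups of $G$ in the required chain (being a convex subgroup and being strictly nested in the indicated way are first-order properties), and each quotient $G^{(j)}/G^{(j-1)}$ is $0$-interpretable in $G$ by exactly the same interpretation that realizes ${\Z^n}_{n-j}/{\Z^n}_{n-j+1}\cong\Z$ in $\Z^n$; hence $G^{(j)}/G^{(j-1)}\equiv\Z$ in $L_{oag}$.

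For (2), with $G=\Z^n\times\Q$, I would set $G^{(-1)}=\{0\}$ and $G^{(0)}=C=\{0\}^n\times\Q$, which is $0$-definable by the formula $\gamma(x)$ already described in the text (since divisibility by $2$ holds throughout $[0,|x|]$ exactly when $x\in C$: within $C\cong\Q$ every element is $2$-divisible, while any $x\notin C$ dominates $e_{\{n-1\}}$, which is not $2$-divisible in $G$). For $1\le j\le n$ I would define $G^{(j)}$ by the same Lemma \ref{defconvex}-type formula used for ${\Z^n}_{n-j}$, interpreted in $G$: since $C$ is divisible, $pG\cap\Z^n$-slice equals $p\Z^n$, so counting $\equiv_p$-classes in $[-z,z]$ detects $v(z)\ge n-j$ in the $\Z^n$-part exactly as before. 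Equivalently, $G^{(j)}$ is the preimage of ${\Z^n}_{n-j}$ under the canonical surjection $G\to G/C\cong\Z^n$. Then $G^{(0)}/G^{(-1)}=C\cong\Q$, and for $j\ge 1$ the quotient $G^{(j)}/G^{(j-1)}$ is ${\Z^n}_{n-j}/{\Z^n}_{n-j+1}\cong\Z$. Transfer to an arbitrary $G\equiv\Z^n\times\Q$ proceeds as in (1).

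The main obstacle is the routine but slightly delicate verification that a $0$-definable convex subgroup $H$ of an ordered abelian group yields a $0$-interpretable quotient $G/H$ whose $L_{oag}$-theory is preserved under elementary equivalence of the ambient group. Once this observation is in place, the explicit formulas from Lemma \ref{defconvex} together with $\gamma$ produce the chains and quotient identifications directly, and no further work is required.
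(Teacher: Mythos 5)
The paper states this Fact without proof, and your argument supplies exactly the intended justification: the $0$-definable chains come from Lemma \ref{defconvex} and the formula $\gamma$, and the quotient identifications transfer to an arbitrary $G\equiv\Z^n$ or $G\equiv\Z^n\times\Q$ because each $G^{(j)}/G^{(j-1)}$ (and $G^{(0)}$ itself) is $0$-interpretable, and theories of $0$-interpreted structures are preserved under elementary equivalence of the ambient group. One harmless slip in your aside on $\gamma$: an $x\notin C$ with $v(x)=n-1$ and negative $\Q$-coordinate need not dominate $e_{\{n-1\}}$ (e.g.\ $|x|=(0,\dots,0,1,-1)<e_{\{n-1\}}$), but in that case $|x|\in[0,|x|]$ is itself not $2$-divisible, so $\gamma$ still defines $C$ and your conclusion stands.
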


We rely on the following fact from \cite{weisp}. Consider the first order language $L_{Weis}=\Set{0,1^{(1)},1^{(2)},\dots,1^{(n)},+,-,<,(\equiv_m)_{m>0}}$, where $1^{(1)}$, $1^{(2)}$, $\dots, 1^{(n)}$ are new constant symbols. In $\Z^n$ we interpret $1^{(1)}, 1^{(2)}$, $\dots, 1^{(n)}$ as $(0,\dots,0,1), (0,\dots,0,1,0), \dots,$ \\ $(1,0,\dots,0)$, respectively. In $\Z^n \times \Q$ we interpret $1^{(1)}, 1^{(2)}$, $\dots, 1^{(n)}$ as $(0,\dots,0,1,0),$ $(0,\dots,0,1,0,0), \dots, (1,0,\dots,0)$, respectively. Namely, we expand $L_{oag}$ by the equivalence relations $\equiv_m$, for each positive integer $m$, and $n$ constants $1^{(i)}$, $1 \le i \le n$, for a representative of the smallest positive element in each discretely ordered quotient $G^{(i)} /G^{(i-1)}$, where either $G \equiv \Z^n$ or $G \equiv \Z^n \times \Q$ for some $n \ge 1$. In \cite{weisp}, Weisfenning proved that
\begin{fact}
\label{EQWeis}
Both $Th_{L_{Weis}}(\Z^n)$ and $Th_{L_{Weis}}(\Z^n \times \Q)$ admit elimination of quantifiers.
\end{fact}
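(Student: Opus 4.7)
The plan is to establish quantifier elimination by the standard reduction to eliminating a single existential quantifier from a conjunction of literals, exploiting the explicit layered structure provided by the $0$-definable convex subgroups $G^{(0)} < G^{(1)} < \cdots < G^{(n)}$.

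First, observe that in $L_{Weis}$ every atomic formula is equivalent to one of the form $t(\bar x) \, \square \, 0$, where $t$ is a $\Z$-linear combination of the variables and of the constants $1^{(1)},\dots,1^{(n)}$, and $\square \in \{=,<,\equiv_m\}$. After passing to disjunctive normal form, it suffices to eliminate $\exists y$ from a conjunction $\bigwedge_i \psi_i(y,\bar z)$ of literals. Taking $d$ to be a common positive multiple of all nonzero $y$-coefficients appearing in the $\psi_i$ and replacing $y$ by a fresh variable subject to the extra constraint $y \equiv_d 0$, we may assume each $\psi_i$ involves $y$ with coefficient $\pm 1$. If some $\psi_i$ is an equation, substitute and we are done; otherwise the system reduces to strict lower bounds $L_j(\bar z) < y$, strict upper bounds $y < U_k(\bar z)$, and congruences $y \equiv_{m_\ell} r_\ell(\bar z)$.

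The remaining task is to express the existence of such a $y$ by a quantifier-free formula in $\bar z$. The crucial observation is that, thanks to the added constants $1^{(i)}$, the \emph{level} $v(U_k-L_j) \in \{0,1,\dots,n-1,\infty\}$ of each gap is itself quantifier-free definable: for every fixed $i$ and every $M \in \mathbb{N}$ the statement ``$U_k - L_j \geq M\cdot 1^{(i)}$'' is atomic, and a Boolean combination isolates the leading nonzero level of the gap. Performing a case-split on the tuple of levels of the various $U_k - L_j$, one sees that when the gap dominates the moduli at every level the congruences can be realized by a Presburger-style choice of $y$ coordinate by coordinate; otherwise the problem reduces to a classical Presburger solvability question inside a single quotient $G^{(j)}/G^{(j-1)} \equiv \Z$, for which a quantifier-free equivalent is standard.

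For $\Z^n \times \Q$ the only modification occurs at the bottom: the convex subgroup $G^{(0)} \equiv \Q$ is divisible, so congruences restricted to this level are automatically satisfiable, and the sandwich there is handled by the usual density argument for divisible ordered abelian groups. The main technical obstacle is the combinatorial bookkeeping of the case split across levels; once one verifies that all the necessary ``level-of-gap'' and ``residue-at-level-$i$'' predicates are quantifier-free definable using $1^{(1)},\dots,1^{(n)}$ together with the relations $\equiv_m$, the argument factors cleanly through Presburger QE at each discrete level and the QE for divisible ordered abelian groups at the bottom, yielding a quantifier-free equivalent of $\exists y\,\bigwedge_i \psi_i$ as required.
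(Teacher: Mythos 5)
You should first note that the paper itself does not prove this statement: it is imported as a Fact from Weispfenning \cite{weisp}. So your direct Presburger-style argument is necessarily a different, more self-contained route, and its skeleton is reasonable: since any equivalence verified in the standard models $\Z^n$ and $\Z^n\times\Q$ belongs to their complete theories, a semantic level-by-level analysis of $\exists y\,\bigwedge_i\psi_i$ is a legitimate way to get QE, and your reductions (normalising the coefficient of $y$, splitting into bounds and congruences, using finiteness of $G/mG$ and the constants $1^{(i)}$ to name residues) are standard. However, as written the proposal has two genuine gaps.

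First, the claimed mechanism for detecting the level of a gap fails: no finite Boolean combination of atomic comparisons $U_k-L_j\ge M\cdot 1^{(i)}$ isolates the leading nonzero level. In $\Z^2$ (coordinate $0$ most significant, so $1^{(1)}=(0,1)$, $1^{(2)}=(1,0)$), the elements $g=(1,-m)$ with $m$ large and $g'=(0,m')$ with $m'$ large satisfy exactly the same atoms of this shape occurring in any fixed formula (all thresholds $M\cdot 1^{(1)}$, none of the thresholds $M\cdot 1^{(2)}$), yet their leading levels differ. The correct device is to compare $2(U_k-L_j)$ with $1^{(i)}$: for $g>0$, the leading level of $g$ is at least as significant as that of $1^{(i)}$ if and only if $2g>1^{(i)}$; this is exactly what the $\Sigma_1$-axioms $\forall x\,(2x<1^{(i)}\lor 1^{(i)}<2x)$ encode, and it makes the convex subgroups, hence the levels, quantifier-free definable. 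Second, your central dichotomy is too coarse: even when the gap dominates all the moduli at its own leading level $j$, solvability is not automatic, because every $y\in(L,U)$ has its coordinates above level $j$ forced to equal those of $L$; e.g.\ in $\Z^2$ the interval from $(0,0)$ to $(0,N)$ misses the coset $1^{(2)}+2\Z^2$ for every $N$. So the case split must also record the residue of the truncation of $L$ above the gap's leading level (quantifier-free expressible as a disjunction of conditions $L\equiv_M q_1 1^{(1)}+\dots+q_j 1^{(j)}$, in line with your remark on residue predicates), together with a Presburger condition at level $j$, and, when only the boundary values at level $j$ are available, a recursion into the lower levels (where the divisible bottom $\Q$ of $\Z^n\times\Q$ does trivialise matters). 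These repairs stay within your framework, but the level-detection claim is false as stated and the solvability criterion is incomplete, so the argument needs to be reworked along these lines --- or one simply cites \cite{weisp}, as the paper does.
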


Let $G$ be either a model of $Th_{L_{Weis}}(\Z^n)$ or a model of $Th_{L_{Weis}}(\Z^n \times \Q)$. In particular, every formula $\sigma(x,\bar{a})$ in $L_{Weis}$ with parameters $\bar{a} \subset G$, is equivalent modulo $G$ to a positive boolean combinations of formulas of the following forms
\begin{gather*}
(1) \quad kx \equiv_m t(\bar{a}), \text{or } kx \notequiv_m t(\bar{a}), \\
(2) \quad kx = t(\bar{a}), kx < t(\bar{a}), \text{or } t(\bar{a}) < kx,
\end{gather*}
where $k>0$ is a natural number. Let $t(\bar{a})=g \in G$. Since $[G : mG] < \infty$, every formula in (1) is equivalent to a finite disjuction of formulas of the form $kx \equiv_m g$. If such a formula defines a nonempty set, then there exists $h \in G$ such that $kx \equiv_m g$ is equivalent to $x \equiv_{m'} h$ for some $m'>0$. So we may assume that all formulas in (1) are of the form $x \equiv_m g$. Moreover, every formula in (2) defines a set which is a finite union of cosets of $2G$ intersected with intervals, see Theorem 12 and Theorem 15 in \cite{coset-minimal}. Therefore, every definable set in $G$ is a finite union of cosets of subgroups $mG$ intersected with intervals with endpoints in $G \cup \{\pm \infty\}$, for some positive integer $m$. Hence, $G$ is coset-minimal. We recall that
\begin{definition}
A totally ordered group (with possibly extra structure) is \emph{coset-minimal} if every definable set is a finite union of cosets of definable subgroups intersected with intervals. 
\end{definition}
In \cite{point}, it was proven that the groups elementarily equivalent to either $\Q$, or $\Z^n$, or $\Z^n \times \Q$, for some $n \ge 1$, are exactly the coset-minimal pure (modulo some constants) groups. Moreover, since any $\equiv_m$-equivalence class is $0$-definable, any such group provides an example of a quasi o-minimal structure (see \cite{quasi o-minimal}).

We show that the theories of $\Z^n$ and $\Z^n \times \Q$ have definable Skolem functions in $L_{Weis}$ and in a suitable language expanding $L_{Weis}$, respectively. 
In \cite{Skolemfunc}, using proof-theoretic arguments, Scowcroft identified the following sufficient condition for a model complete theory in order to have definable Skolem functions.

\begin{prop}
Let $L$ be a first order language with at least one constant symbol and $T$ be a model complete theory in $L$. Let $\Sigma$ be a set of \ $\forall \exists$-axioms for $T$ and $\Delta$ be the set of all quantifier free $L$-formulas $\delta(\bar{u},\bar{v})$ such that $\forall \bar{u} \exists \bar{v} \delta(\bar{u},\bar{v}) \in \Sigma$. Suppose for each $\delta(\bar{u},\bar{v}) \in \Delta$ there is an $L$-formula $\gamma_{\delta}(\bar{u},\bar{v})$ such that $T \vdash \forall \bar{u} \exists ! \bar{v} \ \gamma_{\delta}(\bar{u},\bar{v})$ and $T \vdash \forall \bar{u} \forall \bar{v} (\gamma_{\delta}(\bar{u},\bar{v}) \to \delta(\bar{u},\bar{v}))$. Then $T$ has definable Skolem functions.
\end{prop}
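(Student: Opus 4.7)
The plan is to leverage model completeness of $T$ to reduce the Skolemization of an arbitrary $L$-formula $\phi(\bar{x}, y)$ to the Skolemization of a quantifier-free matrix from $\Delta$, for which the hypothesis provides a definable uniformization $\gamma_{\delta}$.

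First, by model completeness, fix a quantifier-free $\phi_0(\bar{x}, y, \bar{z})$ with $T \vdash \phi(\bar{x}, y) \leftrightarrow \exists \bar{z}\, \phi_0(\bar{x}, y, \bar{z})$. Setting $\bar{v} = (y, \bar{z})$, the task reduces to finding a formula that selects a unique tuple $\bar{v}$ satisfying $\phi_0(\bar{x}, \bar{v})$ whenever such a tuple exists; the first coordinate of the chosen $\bar{v}$ is then the Skolemized $y$. Next, I would construct a candidate $\delta$ to feed into the hypothesis. Applying model completeness to $\lnot \exists \bar{v}\, \phi_0$ produces a quantifier-free $\chi(\bar{x}, \bar{w})$ with $T \vdash \lnot \exists \bar{v}\, \phi_0 \leftrightarrow \exists \bar{w}\, \chi$, and I would set
\[
\delta(\bar{x}, \bar{v}, \bar{w}) := \phi_0(\bar{x}, \bar{v}) \lor \chi(\bar{x}, \bar{w}).
\]
Then $T \vdash \forall \bar{x}\, \exists \bar{v}\, \bar{w}\, \delta$, using the hypothesis that $L$ contains at least one constant symbol to pad the idle disjunct whichever side is actively witnessed.

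The main obstacle is that this $\delta$ need not literally be the matrix of an axiom in the fixed $\Sigma$, so one cannot apply the hypothesis to it directly. The remedy is compactness: the $\forall \exists$-consequence $\forall \bar{x}\, \exists \bar{v}\, \bar{w}\, \delta$ of $T$ is derivable from finitely many axioms $\forall \bar{u}_i\, \exists \bar{v}_i\, \delta_i \in \Sigma$, and the corresponding finitely many definable uniformizations $\gamma_{\delta_i}$ can be combined by a definable case split (according to which $\delta_i$ is actively satisfied by the input $\bar{x}$) into a single definable subformula $\gamma_\delta(\bar{x}, \bar{v}, \bar{w})$ of $\delta$ that provably picks out a unique pair $(\bar{v}, \bar{w})$. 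Making this combination syntactically explicit -- in particular arranging that the case split and the pairing are total and functional under $T$ -- is the technically delicate step.

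Finally, with $\gamma_\delta$ in hand, I would define
\[
\psi(\bar{x}, y) := \exists \bar{z}\, \exists \bar{w}\, \bigl(\gamma_\delta(\bar{x}, y, \bar{z}, \bar{w}) \land \phi_0(\bar{x}, y, \bar{z})\bigr).
\]
If $\exists y\, \phi(\bar{x}, y)$ holds then $\exists \bar{v}\, \phi_0$ holds, hence $\lnot \exists \bar{w}\, \chi$, so the unique $\gamma_\delta$-tuple $(\bar{v}^{\ast}, \bar{w}^{\ast})$ must satisfy the $\phi_0$-disjunct; its first coordinate is then the unique $y$ with $\psi(\bar{x}, y)$, and the remaining coordinates of $\bar{v}^{\ast}$ witness $\phi(\bar{x}, y)$ via $\exists \bar{z}\, \phi_0$. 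Uniqueness of $y$ is inherited from the uniqueness clause for $\gamma_\delta$, and the implication $\psi \to \phi$ is immediate from the $\phi_0$-conjunct built into $\psi$. This produces the required Skolem formula and establishes that $T$ has definable Skolem functions.
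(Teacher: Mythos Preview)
The paper does not supply its own proof of this proposition: it is quoted from Scowcroft's paper \cite{Skolemfunc}, and the only indication of method is that Scowcroft uses ``proof-theoretic arguments.'' So there is no in-paper proof to compare against; any assessment must be of your argument on its own terms, with Scowcroft's Herbrand-style approach as background.

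Your reduction is sound up to the point where you have the quantifier-free $\delta(\bar{x},\bar{v},\bar{w})=\phi_0\lor\chi$ with $T\vdash\forall\bar{x}\,\exists\bar{v}\bar{w}\,\delta$, and your final step deriving $\psi$ from a hypothetical $\gamma_\delta$ is correct. The gap is the passage you yourself flag as ``technically delicate'': producing $\gamma_\delta$ from the finitely many $\gamma_{\delta_i}$ supplied by compactness. Your description --- a ``definable case split according to which $\delta_i$ is actively satisfied by the input $\bar{x}$'' --- does not make sense as written. The $\delta_i(\bar{u}_i,\bar{v}_i)$ are matrices of axioms in $\Sigma$; their variables are unrelated to $\bar{x}$, and in any model of $T$ every instance $\forall\bar{u}_i\exists\bar{v}_i\,\delta_i$ holds, so there is no nontrivial case split on ``which $\delta_i$ holds.'' Knowing which finitely many axioms \emph{entail} $\forall\bar{x}\exists\bar{v}\bar{w}\,\delta$ does not by itself tell you how to \emph{compute} a witness $(\bar{v},\bar{w})$ from $\bar{x}$.

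What is actually needed is an iteration of the definable Skolem functions $f_{\delta_i}$ (given by the $\gamma_{\delta_i}$) to build $L'$-terms in $\bar{x}$ that witness $\delta$; one then selects, by a first-successful-index case split among finitely many such candidate terms, a definable witness. This is precisely what Herbrand's theorem (applied to the universal $L'$-theory consisting of the Skolemized axioms $\forall\bar{u}_i\,\delta_i(\bar{u}_i,f_{\delta_i}(\bar{u}_i))$) delivers, and it is the content of Scowcroft's proof-theoretic argument. Equivalently, one can argue model-theoretically that the closure of $\bar{x}$ under the $f_{\delta_i}$ is a model of $\Sigma$, hence (by model completeness) an elementary substructure, and then use compactness to extract finitely many candidate terms uniformly in $\bar{x}$. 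Either route fills the gap; your sketch does neither, and the mechanism you describe in its place is not well-defined.
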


Note that by Fact \ref{EQWeis} the theories $Th_{L_{Weis}}(\Z^n)$ and $Th_{L_{Weis}}(\Z^n \times \Q)$ are model-complete and, hence, can be axiomatised by $\forall \exists$-sentences. Consider the following sets of sentences introduced in \cite[pp. 149-150]{tanaka}:
\begin{gather*}
\fleq{\begin{split}
	\Sigma_0 = & \forall x \forall y \forall z ((x+y)+z=x+(y+z)); \\
	& \forall x (x+0=x); \\
	& \forall x (x-x=0); \\
	& \forall x \forall y (x+y=y+x); \\
	& \forall x (\neg (x<x)); \\ 
	& \forall x \forall y \forall z (x<y<z \to x<z); \\
	& \forall x (x=0 \lor 0<x \lor x<0); \\
	& \forall x \forall y (0<x \land  0<y \to 0<x+y).
\end{split}} \\
\fleq{\begin{split}
\Sigma_1 = & \ 0< 21^{(i)}<1^{(i+1)} \text{ for each $i$ such that } 1 \le i \le n-1; \\
& \forall x(2x<1^{(i)} \lor 1^{(i)}<2x) \text{ for each $i$ such that } 1 \le i \le n; \\
& \forall x (2x<1^{(i)} \to mx < 1^{(i)}) \text{ for each $i$ such that } 1 \le i \le n \text{ and } m \ge 2; \\
& \forall x (x \equiv_m 0 \leftrightarrow \exists y \exists z(-1^{(1)}<2z<1^{(1)} \land x=my+z)) \text{ for each } m>0; \\
& \forall x (\underset{0 \le q_1,\dots,q_n \le m-1}{\bigvee}(x \equiv_m q_11^{(1)}+\dots+q_n1^{(n)})) \text{ for each } m>1; \\
& \forall x (-1^{(1)}<2x<1^{(1)} \to \exists y(x=my)) \text{ for each } m>1.
\end{split}} \\
\fleq{\Sigma_2 = \forall x (\neg (0<x<1^{(1)})).} \\
\fleq{\Sigma_3 = \exists x (0<x<1^{(1)}).}
\end{gather*}
It was shown in \cite{tanaka} that the theories $Th_{L_{Weis}}(\Z^n)$ and $Th_{L_{Weis}}(\Z^n \times \Q)$ are axiomatized by $\Sigma_0 \cup \Sigma_1 \cup \Sigma_2$ and $\Sigma_0 \cup \Sigma_1 \cup \Sigma_3$, respectively. Therefore we are able to use Scowcroft's criterion for the existence of definable Skolem functions in model complete theories. It follows easily that
\begin{theor}
\label{defSkolfunc1}
The theory $Th_{L_{Weis}}(\Z^n)$ has definable Skolem functions.
\end{theor}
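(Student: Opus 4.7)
The plan is to apply the preceding proposition of Scowcroft. By Fact~\ref{EQWeis} the theory $T := Th_{L_{Weis}}(\Z^n)$ eliminates quantifiers, hence is model complete, and $L_{Weis}$ contains the constant $0$. As recalled in the paragraph before the theorem, $\Sigma := \Sigma_0 \cup \Sigma_1 \cup \Sigma_2$ axiomatises $T$, and every sentence of $\Sigma$ can be put in $\forall\exists$-form by routine prenex manipulations; in particular, the biconditional in $\Sigma_1$ splits into a $\forall\exists$-implication and a purely universal implication. All universal axioms contribute only trivial matrices (with empty existential block) to the set $\Delta$, so one only needs to handle the two genuinely existential schemas of $\Sigma_1$, namely, for each $m>1$,
\begin{gather*}
\forall x\,\bigl(-1^{(1)}<2x<1^{(1)} \,\to\, \exists y\,(x=my)\bigr), \\
\forall x\,\bigl(x \equiv_m 0 \,\to\, \exists y\,\exists z\,(-1^{(1)}<2z<1^{(1)} \land x=my+z)\bigr),
\end{gather*}
with quantifier-free matrices
\begin{align*}
\delta_1(x,y) &:= \bigl((-1^{(1)}<2x<1^{(1)}) \to x=my\bigr), \\
\delta_2(x,y,z) &:= \bigl(x\equiv_m 0 \to ((-1^{(1)}<2z<1^{(1)}) \land x=my+z)\bigr).
\end{align*}

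The key observation is that in every model of $T$ the constant $1^{(1)}$ is the minimum positive element and the group is torsion-free; both facts hold in $\Z^n$ and hence, by completeness, belong to $T$. Consequently $T \vdash \forall z(-1^{(1)}<2z<1^{(1)} \to z=0)$, and whenever $x \in mG$ the element $y$ with $my=x$ is unique. This rigidity pins down the following natural Skolem witnesses:
\[
\gamma_{\delta_1}(x,y) \,:=\, \bigl((-1^{(1)}<2x<1^{(1)}) \land x=my\bigr) \,\lor\, \bigl(\neg(-1^{(1)}<2x<1^{(1)}) \land y=0\bigr),
\]
\[
\gamma_{\delta_2}(x,y,z) \,:=\, \bigl(x\equiv_m 0 \land z=0 \land x=my\bigr) \,\lor\, \bigl(\neg(x\equiv_m 0) \land y=0 \land z=0\bigr).
\]
On the second disjunct of each formula the witnesses are simply $0$; on the first disjunct, $z=0$ is forced by the rigidity above, existence of $y$ is guaranteed by the corresponding axiom, and uniqueness of $y$ follows from torsion-freeness. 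One checks directly that $T \vdash \forall x\,\exists!\, \bar v\, \gamma(x,\bar v)$ and $T \vdash \gamma \to \delta$ in both cases: the implication $\gamma_{\delta_2} \to \delta_2$, for instance, is clear because in the first disjunct $2z=0$ lies strictly between $-1^{(1)}$ and $1^{(1)}$ and $x=my+z$ with $z=0$, while in the second disjunct the hypothesis of $\delta_2$ fails.

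The substantive step is therefore the careful inspection of $\Sigma$ to prenex the biconditional in $\Sigma_1$ and verify that only the two schemas above contribute non-trivial matrices to $\Delta$; once this bookkeeping is in place, the Skolem witnesses write themselves, the hypotheses of Scowcroft's proposition are satisfied, and definable Skolem functions for $T$ follow.
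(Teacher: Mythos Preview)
Your proposal is correct and follows essentially the same approach as the paper: both apply Scowcroft's criterion to the axiomatisation $\Sigma_0\cup\Sigma_1\cup\Sigma_2$, reduce to the two existential schemas in $\Sigma_1$, and use that $-1^{(1)}<2z<1^{(1)}$ forces $z=0$ together with torsion-freeness to get uniqueness of the witnesses. Your version is simply more explicit, writing out the $\gamma$-formulas with the case split on whether the hypothesis holds, whereas the paper leaves this bookkeeping implicit.
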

\begin{proof}
We just need to show the existence of definable Skolem functions for the non-universal axioms:
\begin{gather*}
\forall x (x \equiv_m 0 \leftrightarrow \exists y \exists z(-1^{(1)}<2z<1^{(1)} \land x=my+z)) \text{ for each } m>0, \text{ and} \\
\forall x (-1^{(1)}<2x<1^{(1)} \to \exists y(x=my)) \text{ for each } m>1.
\end{gather*}
It is sufficient to note that, in any model $G$ of $Th_{L_{Weis}}(\Z^n)$, if $g \in G$ is such that $g=mh$ for some $h \in G$, then $h$ is unique. Indeed, let $g,h,k \in G$ be such that $g=mh+k$ and $-1^{(1)}<2k<1^{(1)}$. Trivially, by the interpretation of $1^{(1)}$, the inequalities $-1^{(1)}<2k<1^{(1)}$ imply $k=0$. Hence, $k$ is unique and, so also $h$ in $g=mh$ is unique.
\end{proof}

In order to prove the same result for $\Z^n \times \Q$, for any $n \ge 1$, fix an element $c$ in $C=\{0\}^n \times \Q$ such that $c>0$. Then the quantifier elimination in $Th_{L_{Weis}}(\Z^n \times \Q)$ is not affected by adding $c$ as a new  constant to $L_{Weis}$. Let $L_{Weis}(c)$ denote $L_{Weis} \cup \{c\}$. Now an axiomatization of $Th_{L_{Weis}(c)}(\Z^n \times \Q)$ is given by $\Sigma_0 \cup \Sigma_1 \cup \Sigma_3$ and the following axiom: 
\begin{equation*}
0<2c<1^{(1)}.
\end{equation*} 

\begin{theor}
\label{defSkolfunc2}
The theory $Th_{L_{Weis}(c)}(\Z^n \times \Q)$ has definable Skolem functions.
\end{theor}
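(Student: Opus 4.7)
The plan is to adapt the argument of Theorem \ref{defSkolfunc1}, applying Scowcroft's criterion (the proposition immediately above) to the given axiomatization $\Sigma_0 \cup \Sigma_1 \cup \Sigma_3 \cup \{0 < 2c < 1^{(1)}\}$ of $Th_{L_{Weis}(c)}(\Z^n \times \Q)$. Adjoining the constant $c$ does not disturb the quantifier elimination of Fact \ref{EQWeis}, so the theory is model complete, and it is enough to exhibit, for each non-universal axiom $\forall \bar{u} \exists \bar{v}\, \delta(\bar{u},\bar{v})$, a formula $\gamma_{\delta}$ uniquely defining $\bar{v}$ from $\bar{u}$ and implying $\delta$. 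Scanning the axioms, the non-universal ones are: (i) the forward direction $\forall x \,(x \equiv_m 0 \to \exists y \exists z\,(-1^{(1)} < 2z < 1^{(1)} \land x = my + z))$ of the biconditional axiom in $\Sigma_1$, for each $m > 0$; (ii) $\forall x \,(-1^{(1)} < 2x < 1^{(1)} \to \exists y\,(x = my))$ for each $m > 1$; and (iii) the purely existential axiom $\exists x \,(0 < x < 1^{(1)})$ of $\Sigma_3$.

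The main obstacle, compared with the $\Z^n$ case, is that the inequality $-1^{(1)} < 2z < 1^{(1)}$ no longer forces $z = 0$: in any model $G$ its solution set coincides with the infinitesimal divisible convex subgroup $C \equiv \Q$, so the decomposition $x = my + z$ is badly non-unique. I would recover uniqueness by combining two observations: first, $G$ is torsion-free (a universal consequence of $\Sigma_0$), hence the equation $my = x$ has at most one solution; second, I would \emph{force} $z = 0$ in the Skolem formula, which is compatible with the matrix formula (since $0 \in C$) and kills the remaining ambiguity.

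Concretely, I would take
\[
\gamma_{(i)}(x,y,z) \coloneq (x \equiv_m 0 \to my = x \land z = 0) \land (\neg(x \equiv_m 0) \to y = 0 \land z = 0)
\]
for (i), and
\[
\gamma_{(ii)}(x,y) \coloneq (-1^{(1)} < 2x < 1^{(1)} \to my = x) \land (\neg(-1^{(1)} < 2x < 1^{(1)}) \to y = 0)
\]
for (ii). On the nontrivial branch, existence of a solution $y$ is provided by the axiom itself (using the divisibility of $C$ in case (ii)), while uniqueness is torsion-freeness; the trivial branch simply assigns the zero tuple. For (iii) the constant $c$ supplies the canonical witness: $0 < 2c < 1^{(1)}$ entails $0 < c < 1^{(1)}$, so $\gamma_{(iii)}(x) \coloneq (x = c)$ is a definable Skolem function. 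This is precisely the role of the new constant, since in the pure language $L_{Weis}$ there is no $0$-definable element of $C \setminus \{0\}$ and hence no definable witness to the bare existential in $\Sigma_3$. Feeding $\gamma_{(i)}, \gamma_{(ii)}, \gamma_{(iii)}$ into Scowcroft's proposition then yields the theorem.
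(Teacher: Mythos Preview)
Your proposal is correct and follows essentially the same route as the paper: apply Scowcroft's criterion to the three non-universal axioms, use $c$ as the canonical witness for $\Sigma_3$, and handle the remaining two by forcing $z=0$ and invoking torsion-freeness for uniqueness. The one point you leave slightly implicit is existence in case (i): the axiom only yields $x=my+z$ with $z\in C$, not $my=x$ outright, so one must absorb $z$ into $y$ via the divisibility of $C$ (i.e.\ via axiom (ii)); this is precisely the reduction the paper spells out when it writes $k=mk'$ and concludes ``we may assume $k=0$''.
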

\begin{proof}
We just need to show the existence of definable Skolem functions for the non-universal axioms:
\begin{gather*}
\forall x (x \equiv_m 0 \leftrightarrow \exists y \exists z(-1^{(1)}<2z<1^{(1)} \land x=my+z)) \text{ for each } m>0, \text{ and} \\
\forall x (-1^{(1)}<2x<1^{(1)} \to \exists y(x=my)) \text{ for each } m>1, \text{ and} \\
\exists x (0<x<1^{(1)}).
\end{gather*}
Note that $0<c<1^{(1)}$. Then, as in the proof of Theorem \ref{defSkolfunc1}, it suffices to note that, in any model $G$ of $Th_{L_{Weis}}(\Z^n \times \Q)$, if $g \in G$ is such that $g=mh$ for some $h \in G$, then $h$ is unique. Indeed, let $g,h,k \in G$ be such that $g=mh+k$ and $-1^{(1)}<2k<1^{(1)}$. In particular, there exists $k' \in G$ such that $k=mk'$, and $g=m(h+k')$. Therefore, we may assume $k=0$.  
\end{proof}

By the characterization of dp-minimal ordered groups in \cite{dp-minimal}, for any $n \ge 1$, the finite lexicographic products $\Z^n$ and	$\Z^n \times \Q$ are dp-minimal. The author has recently learned that Vicaria \cite{vicaria} has indentified a suitable many-sorted language in which the class of dp-minimal ordered abelian groups eliminates imaginaries. We aimed at identifying a single-sorted language that could suffice for eliminating imaginaries for the theory of the ordered groups $\Z^n$, and $\Z^n\times\Q$, for any $n \ge 1$. The languages $L_{Weis}$ and $L_{Weis}(c)$ seemed to be promising for this goal, since in these languages we can eliminate the $\equiv_p$-equivalence classes and,
by Theorem \ref{defSkolfunc1} and Theorem \ref{defSkolfunc2}, there are definable Skolem functions. Indeed, it is well known that, provided that one can uniformly associate a canonical parameter to every unary definable set, the existence of definable Skolem functions is a sufficient condition for uniform elimination of imaginaries (see \cite[Lemma 4.4.3]{hodges}).

The following example pointed out to the author by M. Hils shows that unfortunately this is not plausible, and a many-sorted language seems unavoidable. Consider the lexicographic product $G=\Z \times \mathbb{R} \times \Z$ in $L_{Weis}$ (or in some expansion $L$ of $L_{Weis}$ by adding new constants) and suppose $G$ admits elimination of imaginaries. By Fact \ref{EQWeis}, we have that every infinite definable set $X \subseteq G^m$ is uncountable. Then, in particular, the set of canonical parameters of cosets $a + \{0\} \times \mathbb{R} \times \Z$, with $a \in G$, is uncountable. This is clearly false, since $\frac{\Z \times \mathbb{R} \times \Z}{\{0\} \times \mathbb{R} \times \Z} \cong \Z$. The contradiction follows from the existence of a canonical parameter for $a + \{0\} \times \mathbb{R} \times \Z$, and so $G \models Th_{L_{Weis}}(\Z^2)$ does not admit elimination of imaginaries. Similar arguments can be used to show that the theory of $\Z^n$, for any $n>1$, and the theory of $\Z^n \times \Q$, for any $n \ge 1$, do not admit elimination of imaginaries in any expansion $L$ of $L_{oag}$ by adding new constants.

\begin{remark}
It is still unsolved the problem of eliminating imaginaries for the theories of $\mathscr{H}_{i < \alpha} \Z$ and $\mathscr{H}_{i < \alpha} \Z \times \Q$ with $\alpha$ any ordinal, since these groups do not belong to the class of ordered abelian groups considered in \cite{vicaria}.
\end{remark}

\subsection*{\textit{Acknowledgements}} I would like to thank Immanuel Halupczok for suggesting this topic. I owe a special thank to Lorna Gregory and Angus Macintyre for many fruitful discussions and useful remarks. I would also like to express my gratitude to my supervisor Paola D'Aquino for her constant guidance.

\end{document}